\newtheorem{thm}{Theorem}[section]
\newtheorem*{thm*}{Theorem}
\newtheorem{Thm}{Theorem}
\newtheorem{lm}[thm]{Lemma}
\newtheorem*{lm*}{Lemma}
\newtheorem{prop}[thm]{Proposition}
\newtheorem*{prop*}{Proposition}
\newtheorem{Prop}[Thm]{Proposition}
\newtheorem{cor}[thm]{Corollary}
\newtheorem*{cor*}{Corollary}
\newtheorem{Cor}[Thm]{Corollary}
\theoremstyle{definition}
\newtheorem{defi}[thm]{Definition}
\theoremstyle{remark}
\newtheorem{rmk}[thm]{Remark}
\newtheorem*{rmk*}{Remark}
\newtheorem{ex}[thm]{Example}
\newtheorem*{question}{Question}
\DeclareMathOperator{\rank}{rk}
\DeclareMathOperator{\lk}{lk}
\DeclareMathOperator*{\connsum}{\raisebox{-0ex}{\scalebox{1.4}{$\#$}}}
\DeclareMathOperator{\FF}{\mathbb{F}}
\DeclareMathOperator{\RR}{\mathbb{R}}
\DeclareMathOperator{\ZZ}{\mathbb{Z}}
\DeclareMathOperator{\Mo}{\widehat{\mathcal{M}}}
\DeclareMathOperator{\lfr}{\mathrm{rk}_{HFL}}
\title{Links of second smallest knot Floer homology}
\author[Juhyun Kim]{Juhyun Kim}
\address{Department of Mathematics, Caltech, Pasadena, CA, 91125}
\email{jhyunk@caltech.edu}
\subjclass[2010]{
57M27, 57M25
}
\begin{document}

\maketitle

\begin{abstract}
We prove that the rank of knot Floer homology detects the Hopf links, and generalize this result further to classify the links of the second smallest knot Floer homology. We also prove a knot Floer homology analog of \cite[Theorem 1]{lipshitzKhovanovHomologyAlso2019} and give a partial answer to when the equality holds in the rank inequality between the knot Floer homology of a link and its sublinks.
\end{abstract}

\section{Introduction}
Since the introduction, knot Floer homology \cite{ozsvathHolomorphicDisksKnot2004, rasmussenFloerHomologyKnot2003} and its refinement, link Floer homology \cite{ozsvathHolomorphicDisksLink2008}, have been a powerful tool in knot and link detection problem. For example, knot Floer homology detects the unknot \cite{ozsvathHolomorphicDisksGenus2004}, trefoil knots \cite{ghigginiKnotFloerHomology2008, heddenGeographyBotanyKnot2018}, figure-eight knots \cite{ghigginiKnotFloerHomology2008}, and unlinks \cite{niHomologicalActionsSutured2014}. It is interesting to observe that, among the detection results listed above, all except the figure-eight knot can be detected by the rank of the knot Floer homology without further knowledge on the internal structure of their knot Floer homology. Similar pattern appears in other homological invariants like Khovanov homology; cf. \cite{batsonLinksplittingSpectralSequence2015}, \cite{baldwinKhovanovHomologyDetects2018} for instance.

In this paper, we investigate such rank detection theorems further with the focus on links of multiple components. 

\begin{Thm}\label{thm:main}
For a fixed $l$, the second smallest knot Floer homology rank of $l$-component links is $2^l$, and only the disjoint union of the Hopf link and the $(l-2)$-component unlink realizes this rank.
\end{Thm}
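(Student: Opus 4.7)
The plan is to prove the theorem by induction on $l \geq 2$, taking as the base case the Hopf link detection theorem, which the paper establishes as a prerequisite: any $2$-component link with $\lfr = 4$ is a Hopf link. Together with Ni's unlink detection for $\lfr = 2$ and a parity argument excluding $\lfr = 3$ for $2$-component links (via the graded Euler characteristic and the symmetry of the multivariable Alexander polynomial), this disposes of $l = 2$.

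For the inductive step, let $L$ be an $l$-component link with $\lfr(L) \leq 2^l$ that is not an unlink, and assume the theorem for all smaller component numbers. The split case is routine: if $L = L_1 \sqcup L_2$ nontrivially with $l_i \geq 1$ components each, the split link formula
\[\lfr(L) \;=\; 2\,\lfr(L_1)\,\lfr(L_2)\]
combined with the inductive hypothesis (each $\lfr(L_i)$ is either $2^{l_i - 1}$, or $2^{l_i}$, or at least $2^{l_i} + 1$) forces, by a short arithmetic case analysis, that one factor is an unlink and the other is a Hopf $\sqcup$ unlink, so that $L$ itself is a Hopf $\sqcup$ unlink.

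The core of the proof is the non-split case. I would invoke an auxiliary sublink rank inequality of the form
\[\lfr(L) \;\geq\; 2^{\,l - l'}\,\lfr(L')\]
for any sublink $L' \subseteq L$ with $l'$ components (the paper's work toward such an inequality is signalled in the abstract). If $L$ contains a non-split $2$-component sublink $L_0$, the base case gives $\lfr(L_0) \geq 4$, so $\lfr(L) \geq 2^l$, with equality governed by the equality case of the sublink inequality; for $l \geq 3$ that equality case should force $L_0$ to split off from $L$, contradicting non-splitness. The truly delicate subcase is when $L$ is non-split but every $2$-component sublink is split, as with Brunnian links such as the Borromean rings. Here I expect the proof to combine the Lipshitz--Sarkar analog developed elsewhere in the paper, the Thurston norm detection property of $\widehat{HFL}$, and finer Alexander-grading arithmetic to conclude that $\lfr(L) > 2^l$. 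This Brunnian configuration is where I anticipate the main technical obstacle.
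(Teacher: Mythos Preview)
Your reduction to the nonsplit case via the disjoint union formula is fine and matches the paper. The nonsplit case, however, has a real gap, and the paper's argument there is quite different from what you outline.

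Your plan hinges on two unproved assertions. First, when $L$ contains a nonsplit $2$-sublink $L_0$, you claim that equality in the iterated rank inequality $\lfr(L)\geq 2^{l-2}\lfr(L_0)$ ``should force $L_0$ to split off from $L$.'' The paper's partial result on the equality case (Proposition~\ref{prop:relative}) is much weaker: it says only that the removed component is \emph{algebraically} unlinked from the rest and that the remaining sublink is nonsplit. Iterating this gives no contradiction to $L$ being nonsplit, so equality is not ruled out by your route. Second, you explicitly leave the Brunnian case (every $2$-sublink split, $L$ nonsplit) unresolved; this is not a minor residual case, since for $l\geq 3$ it is exactly the situation one must control.

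The paper avoids both problems by a polytope argument rather than a sublink-equality argument. For a nonsplit link of unknots there are no trivial components, so by the identification of the link Floer polytope with (half) the Minkowski sum of the dual Thurston norm polytope and the cube $[-1,1]^l$ (Theorem~\ref{thm:minkowski}), one gets that $P(L)$ has at least $2^l$ vertices (Corollary~\ref{cor:numvert}), and equality forces $P(L)$ to be an axis-aligned box. The inductive step then argues via the component-removal spectral sequence: if $\lfr(L)<2^l$, every $(l-1)$-sublink is an unlink by induction, so $L$ is algebraically split; projecting $P(L)$ to a coordinate hyperplane still has $\geq 2^{l-1}$ vertices, and a parity count (each surviving grading carries an even number of generators) forces $\lfr(L)\geq 2^l$, a contradiction. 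For uniqueness at $\lfr(L)=2^l$, the box shape and a short case analysis on whether some $(l-1)$-sublink is an unlink or a Hopf$\,\sqcup\,$unlink (using the grading shift and central symmetry) yield contradictions for $l\geq 3$. None of this uses a ``splitting-off'' characterization of the equality case.
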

As a special case of Theorem \ref{thm:main}, we have
\begin{Cor}\label{cor:rankdetectionHopf}
The rank of knot Floer homology detects the Hopf link. More precisely, for a 2-component link $L$, $L$ is the Hopf link if and only if $\rank\widehat{HFK}(L)=4$.
\end{Cor}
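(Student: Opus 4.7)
The plan is to deduce the corollary directly from Theorem~\ref{thm:main} specialized to $l=2$. I would begin by recalling that Ni's unlink detection theorem \cite{niHomologicalActionsSutured2014} gives $\rank \widehat{HFK}(L) \geq 2^{l-1}$ for any $l$-component link $L$, with equality characterizing the unlink; in particular, the $2$-component unlink has rank exactly $2$. Theorem~\ref{thm:main} then identifies the next smallest value as $2^l = 4$, uniquely realized by the disjoint union of the Hopf link with an $(l-2)$-component unlink.

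Specializing to $l=2$, the $(l-2)$-component unlink is empty, so the only $2$-component link of rank $4$ is the Hopf link itself. Hence, if $L$ is a $2$-component link with $\rank \widehat{HFK}(L) = 4$, then $L$ is not the $2$-component unlink, since $4 > 2$, and Theorem~\ref{thm:main} forces $L$ to be the Hopf link. For the converse direction, I would invoke the standard link Floer homology computation $\rank \widehat{HFK}(\text{Hopf link}) = 4$.

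Since the corollary is essentially a restatement of the $l=2$ case of Theorem~\ref{thm:main}, the only task here is the bookkeeping of separating the unlink rank from the second-smallest rank and checking that the empty auxiliary unlink drops out when $l = 2$. All substantive difficulty is absorbed into the proof of Theorem~\ref{thm:main}, which is the real obstacle; the corollary requires no additional argument beyond this specialization.
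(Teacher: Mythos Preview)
Your proposal is correct and matches the paper's approach exactly: the paper states Corollary~\ref{cor:rankdetectionHopf} without a separate proof, treating it as the immediate $l=2$ specialization of Theorem~\ref{thm:main}, which is precisely what you do. The bookkeeping you outline (Ni's unlink detection for the smallest rank, the empty $(l-2)$-component unlink dropping out, and the direct computation for the Hopf link) is all that is needed.
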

\begin{rmk}
As the Hopf link is the only 2-component fibered link of genus 0 (in $S^3$), it is a direct consequence of the fiberedness detection theorem \cite{niKnotFloerHomology2007} that knot Floer homology detects the Hopf link, cf. \cite{binnsKnotFloerHomology2020}. The novelty here is that the rank of the knot Floer homology alone is strong enough to detect the Hopf link.
\end{rmk}

One key idea in the proof of Theorem \ref{thm:main} is the fact that, like many knot and link invariants, the computation of knot Floer homology of split link can be reduced to that of the factor sublinks:\\
For a split link $L\cong L_1\sqcup L_2$,
\begin{align*}
    \widehat{HFK}(L_1\sqcup L_2)\cong \widehat{HFK}(L_1)\otimes \widehat{HFK}(L_2)\otimes \FF^2,
\end{align*}
where $\FF=\FF_2$ is the prime field of characteristic 2.

In view of this ``divide and conquer" principle, it is important to have a criterion for the splitness of the link in terms of the knot Floer homology. Recently such a splitness detection result for Khovanov homology is proven by Lipshitz and Sarkar \cite[Theorem 1]{lipshitzKhovanovHomologyAlso2019} in terms of the natural module structure over the truncated polynomial ring $A=\FF[X]/(X^2)$ on the Khovanov homology. The module structure is defined as follows \cite{khovanovPatternsKnotCohomology2003, heddenKhovanovModuleDetection2013}:\\
Given a basepoint $p$ on $L$, the merge map with a small unknot in a neighborhood of $p$ defines an action of $p$ on the Khovanov chain complex of $L$, which induces the action of $p$ on the Khovanov homology. Taking a point $q$ at a different link component and considering the induced action of $p$ on the reduced Khovanov homology $\widetilde{Kh}(L, q)$, we obtain a $\FF[X]$-action on $\widetilde{Kh}(L, q)$ where $X$ acts as the basepoint action of $p$. It is easy to check that $X^2=0$, hence the reduced Khovanov homology $\widetilde{Kh}(L,q)$ is endowed with an $A$-module structure. 

There is a similar structure in the context of knot Floer homology which assigns for a pair of basepoints of $L$ an action of $A$, often called the homological action \cite{niHomologicalActionsSutured2014}. For a pair of basepoints of $L$, we connect the two basepoints with an arc. Viewed as a relative 1-cycle in the link complement, the homological action of this arc also induces a $A$-module structure on the knot Floer homology of $L$.

We claim a knot Floer homology version of the splitness detection result using this homological action on the knot Floer homology:
\begin{Thm}\label{thm:splitdetection}
Let $L$ be a link in $S^3$. Then the followings are equivalent:
\begin{enumerate}
\item $L$ is split.
\item $\widehat{HFK}(L)$ is free as an $A$-module, where the $A$-module structure is induced by a path $\zeta$ connecting two different link components.
\end{enumerate}
\end{Thm}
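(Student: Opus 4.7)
The plan is to handle the two directions separately, with the reverse direction carrying the bulk of the work.

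For $(1) \Rightarrow (2)$, I would write $L = L_1 \sqcup L_2$ with a splitting sphere $S$ and choose $\zeta$ to be an arc with one endpoint on a component of $L_1$ and one on a component of $L_2$, meeting $S$ transversely at a single point. The Künneth formula
$$
\widehat{HFK}(L_1 \sqcup L_2) \cong \widehat{HFK}(L_1) \otimes \widehat{HFK}(L_2) \otimes \FF^2
$$
recalled in the introduction arises from a Heegaard diagram stabilization supported near $S$. Identifying the $\FF^2$ factor with $A = \FF[X]/(X^2)$ as a graded $A$-module, a direct diagrammatic computation---using that any arc crossing $S$ once is isotopic rel endpoints to the stabilization arc---shows that $X_\zeta$ acts as multiplication by $X$ on this $A$-factor and trivially on the other factors. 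Hence $\widehat{HFK}(L) \cong \bigl[\widehat{HFK}(L_1) \otimes \widehat{HFK}(L_2)\bigr] \otimes_\FF A$ is free over $A$.

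For $(2) \Rightarrow (1)$, I would argue the contrapositive: assuming $L$ is non-split, so that $M = S^3 \setminus \nu(L)$ is irreducible, I would show that $\widehat{HFK}(L)$ fails to be free over $A$ for any choice of $\zeta$ connecting distinct components. Since $A$ is a local Artinian ring with residue field $\FF$, every finitely generated $A$-module decomposes as $A^{\oplus a} \oplus \FF^{\oplus b}$, and freeness is equivalent to the vanishing of the two-periodic homology
$$
H(\widehat{HFK}(L), X_\zeta) := \ker X_\zeta / \operatorname{im} X_\zeta.
$$
The task therefore reduces to exhibiting a nonzero class in this homology in the non-split case.

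For this I would appeal to Ni's realization of the homological action \cite{niHomologicalActionsSutured2014}: the relative cycle class of $\zeta$ in $H_1(S^3, L)$ is Poincar\'e--Lefschetz dual to a properly embedded surface $F \subset M$, and the homology $H(\widehat{HFK}(L), X_\zeta)$ is identified---up to a predictable tensor factor arising from extra basepoints---with the sutured Floer homology of the sutured manifold obtained by decomposing $M$ along $F$. When $L$ is non-split, irreducibility of $M$ lets one choose $F$ incompressible, making the decomposition taut, and Juh\'asz's non-vanishing theorem for sutured Floer homology of taut balanced sutured manifolds then yields a nonzero class, contradicting freeness. The main obstacle is the precise identification of $H(\widehat{HFK}(L), X_\zeta)$ with the sutured Floer homology of an explicit surface decomposition---this requires setting up a Heegaard diagram adapted to $\zeta$ and carefully tracking the interplay of basepoint and homological actions---together with verifying that the decomposing surface dual to $\zeta$ can be chosen taut whenever the link exterior is irreducible.
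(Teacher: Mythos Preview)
Your $(1)\Rightarrow(2)$ direction matches the paper's: both identify the $\FF^2$ tensor factor in the K\"unneth formula with $A$ and check directly from a split Heegaard diagram that $X_\zeta$ acts by multiplication by $X$ on that factor and trivially on the others.

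For $(2)\Rightarrow(1)$ you take a genuinely different route. The paper proceeds via twisted coefficients, closely following Lipshitz--Sarkar and Alishahi--Lipshitz: it passes to the knotification $\kappa(L)\subset\connsum^{l-1}(S^1\times S^2)$, shows the knotified complement is irreducible, invokes Ni's nonvanishing result for $\underline{\widehat{HFL}}(\kappa(L);\Lambda_\omega)$ over an open set of $2$-forms, upgrades this to all $\omega$ by an upper-semicontinuity argument for $\underline{SFH}$, and then uses the algebraic relation between $\underline{\widehat{HFL}}(\kappa(L);\FF[t^{\pm1}]_\omega)$ and the $A$-module structure on $\widehat{HFL}(\kappa(L))$ to exhibit a nonzero torsion summand. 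Your proposal instead stays in the link exterior, dualizes $\zeta$ to a properly embedded surface $F$, identifies $\ker X_\zeta/\operatorname{im}X_\zeta$ with $SFH$ of the decomposed sutured manifold, and appeals to Juh\'asz's nonvanishing for taut decompositions. This is essentially the surface-decomposition approach of Wang, which the paper explicitly cites as an independent proof. The twisted-coefficient argument is more algebraic and ports the Khovanov-side machinery wholesale; your approach is more geometric, avoids knotification and Novikov rings, but trades those for the two obstacles you correctly flag: making the identification $H(\widehat{HFK}(L),X_\zeta)\cong SFH(M\setminus F)$ precise on a suitable Heegaard diagram, and arranging that a representative $F$ in the prescribed class $\mathrm{PD}[\zeta]$ yields a taut decomposition. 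Both points are genuine work rather than formalities, but both can be carried out, so your outline is sound.
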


\begin{rmk}
In the course of this paper, Wang \cite{wangLinkFloerHomology2020} independently proves a similar result using surface decomposition theory. Our proof of Theorem \ref{thm:splitdetection} follows more closely on \cite{lipshitzKhovanovHomologyAlso2019} by checking the link Floer counterpart of deep results in \cite{alishahiBorderedFloerHomology2019}.
\end{rmk}

Thanks to the disjoint union formula for the knot Floer homology mentioned above, the classification of links whose knot Floer rank is within a certain bound can be reduced to the same question over nonsplit links. For example, we can consider the relative version of the rank detection problem in view of the following rank inequality:\\
For a link $L$ and a link component $L_1\subseteq L$,
\begin{align*}
    \rank \widehat{HFK}(L)\geq 2\cdot\rank \widehat{HFK}(L\setminus L_1).
\end{align*}
In the relative version of the rank detection problem, we seek a link component $L_1$ which satisfies the equality in the above inequality in the sense that a link component is considered minimal with respect to $L$ if the equality holds. We may hope that a link (under certain conditions) is rank-minimal if there is a sequence of link components such that each stage of successive removal of the components satisfies the equality. The unlink detection theorem of Ni \cite{niHomologicalActionsSutured2014}, viewed from this perspective, states that we can find a sequence of link components of full length if and only if the link is an unlink.

Using the formula for the link Floer homology of split links given above, the relative rank detection problem is completely solved when $L_1$ is unlinked from the rest of $L$:\\
When $L$ is a disjoint union of $L_1$ and $L\setminus L_1$, the equality holds if and only if $\rank \widehat{HFK}(L_1)=1$, which is by the unknot detection theorem equivalent to $L_1$ being an unknot.\\
An unlinked, unknotted component of $L$ is often referred to as a \textit{trivial} component.

Hence the question of when the equality holds above becomes interesting only when we restrict the links of consideration to links. In this context, we can give a partial answer:
\begin{Prop}\label{prop:relative}
Let $L$ be a nonsplit link of $l>1$ components, $L_1$ a component of $L$, and suppose that equality holds in the above rank inequality. Then,
\begin{itemize}
\item The component $L_1$ is algebraically unlinked from the rest of the link, ie, $\lk(L_1, L_i)=0$ for $i>1$.
\item The sublink $L\setminus L_1$ is nonsplit.
\end{itemize}
\end{Prop}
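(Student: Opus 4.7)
The plan is to analyze the equality case of the rank inequality via a spectral sequence argument. After stripping off the $V$-factors that distinguish $\widehat{HFK}$ from $\widehat{HFL}$, the inequality $\rank \widehat{HFK}(L) \ge 2\rank \widehat{HFK}(L\setminus L_1)$ reduces to $\rank \widehat{HFL}(L) \ge \rank \widehat{HFL}(L\setminus L_1)$. This in turn arises from an Alishahi--Eftekhary-style basepoint spectral sequence associated to $L_1$ whose $E_1$-page is $\widehat{HFL}(L)$ and whose $E_\infty$-page computes $\widehat{HFL}(L\setminus L_1)$. The equality hypothesis is therefore equivalent to collapse of this spectral sequence at $E_1$, i.e., vanishing of the relevant basepoint-induced differentials.

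For the nonsplitness of $L\setminus L_1$, I argue by contradiction using Theorem \ref{thm:splitdetection}. The claim is vacuous for $l=2$ (a one-component link is tautologically nonsplit), so assume $l\ge 3$. If $L\setminus L_1$ were split, Theorem \ref{thm:splitdetection} would provide an arc $\zeta$ between two components of $L\setminus L_1$ that makes $\widehat{HFK}(L\setminus L_1)$ a free $A$-module. Viewed as an arc in $L$, $\zeta$ is disjoint from $L_1$, so its homological action commutes with the $L_1$-basepoint action driving the spectral sequence; consequently the $A$-module structure descends to every page. The collapse at $E_1$ then transports $A$-freeness from $E_\infty$ up to $E_1=\widehat{HFK}(L)$, and applying Theorem \ref{thm:splitdetection} in the other direction forces $L$ to be split, a contradiction.

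For algebraic unlinking, I would combine an Alexander-graded refinement of the rank inequality with the Torres formula. The multivariable Alexander polynomial $\Delta_L(t_1,\ldots,t_l)$ is the Alexander-multigraded Euler characteristic of $\widehat{HFL}(L)$, and Torres identifies its specialization at the $L_1$-variable with $\bigl(\prod_{i>1}t_{L_i}^{\lk(L_1,L_i)}-1\bigr)\cdot \Delta_{L\setminus L_1}$. Collapse of the spectral sequence forces the Alexander-graded rank data of $L$ and $L\setminus L_1$ to match in a way that demands the Torres factor to be trivial, which occurs precisely when every $\lk(L_1,L_i)=0$. The main obstacle is verifying naturality: that the $\zeta$-action commutes with the basepoint spectral sequence differentials (expected to follow from geometric disjointness of $\zeta$ from $L_1$), and that the collapse hypothesis translates cleanly into the required Alexander-graded matching statement, where one has to control sign cancellations from the Maslov grading when passing between Euler characteristic and rank.
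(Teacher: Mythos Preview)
Your argument for the nonsplitness of $L\setminus L_1$ is essentially the paper's: the homological action of an arc $\zeta$ joining two components of $L\setminus L_1$ commutes with the component-removal differential (the paper makes this precise by passing to generalized, link-minimal Heegaard diagrams with a free $w$-basepoint), so collapse transports the $A$-module structure between $\widehat{HFL}(L)$ and $\widehat{HFL}(L\setminus L_1)\otimes\FF^2$; Theorem~\ref{thm:splitdetection} then gives the contradiction. One small correction to your setup: in this paper $\rank\widehat{HFK}(L)=\rank\widehat{HFL}(L)$, so there are no ``$V$-factors'' to strip, and the $E_\infty$-page of the relevant spectral sequence is $\widehat{HFL}(L\setminus L_1)\otimes\FF^2$, not $\widehat{HFL}(L\setminus L_1)$; the factor of $2$ in the rank inequality comes from that $\FF^2$, not from a discrepancy between $\widehat{HFK}$ and $\widehat{HFL}$.

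For the algebraic unlinking, your Torres-formula route has a genuine gap. The Torres identity $\Delta_L|_{t_1=1}\doteq(T-1)\,\Delta_{L\setminus L_1}$ with $T=\prod_{i>1}t_i^{\lk(L_1,L_i)}$ holds unconditionally and therefore cannot by itself detect collapse. Worse, at the level of $\chi(\widehat{HFL})$ both sides vanish regardless of the linking numbers: $\chi(\widehat{HFL}(L))$ already carries the factor $(t_1^{1/2}-t_1^{-1/2})$, and the $\FF^2$ tensorand on the $E_\infty$ side has Euler characteristic zero. So there is nothing to extract from Euler characteristics, and the ``sign cancellation'' issue you flag is fatal rather than merely technical. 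The paper instead argues directly with the central symmetry of the Alexander multigrading: collapse identifies the $h_1$-projection of the rank function of $\widehat{HFL}(L)$, after the shift by $-\tfrac12(\lk(L_1,L_2),\ldots,\lk(L_1,L_l))$, with twice the rank function of $\widehat{HFL}(L\setminus L_1)$. The former is centrally symmetric about the origin and the latter about the shift vector; a nonzero finitely supported function cannot have two distinct centers of symmetry, so all $\lk(L_1,L_i)$ must vanish. This rank-level symmetry argument replaces your Euler-characteristic step and avoids the Maslov-grading sign issues entirely.
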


This paper is organized as follows. In Section 2, we review various constructions in link Floer homology used throughout this paper. In Section 3, we review the homological action on the link Floer homology and prove a link Floer homology version of the splitness detection theorem, Theorem \ref{thm:splitdetection}. In Section 4, we discuss some basic facts in convex geometry and its consequences on link Floer polytopes. With the results in Section 4, we prove Theorem \ref{thm:main} in Section 5. In Section 6, we study the conditions for equality in the rank inequality and prove Proposition \ref{prop:relative}.

\medskip \noindent \textit{Acknowledgements.} The author would like to thank Yi Ni for the suggestion of the problem and the advice throughout the project.

\section{Preliminary}
We assume that the reader is familiar with the basics of link Floer homology and sutured Floer homology in general; see \cite{rasmussenFloerHomologyKnot2003, ozsvathHolomorphicDisksKnot2004, ozsvathHolomorphicDisksLink2008, juhaszHolomorphicDiscsSutured2006} for the definitions and notations used throughout this section. Unless otherwise stated, the coefficient ring for all variants of Floer homology groups is $\FF=\mathbb{Z}/2\mathbb{Z}$.

\subsection{Knotification}
Let $L\subseteq Y$ be a null-homologous oriented link in a closed oriented three-manifold $Y$ and choose two points $p_1, q_1\in L$ which are not on the same component of $L$. Then one can form a new (null-homologous) link $\kappa(L, \left\{p_1,q_1\right\})\subseteq Y\# (S^1\times S^2)$ of one less components, constructed as follows:\\
First attach a 1-handle with the two feet on $p_1$ and $q_1$ over $S^3$. Then take the band sum along a band whose core is the core of the attached 1-handle.

\begin{center}
    \begin{figure}[ht]
    \includegraphics[width=0.8\columnwidth]{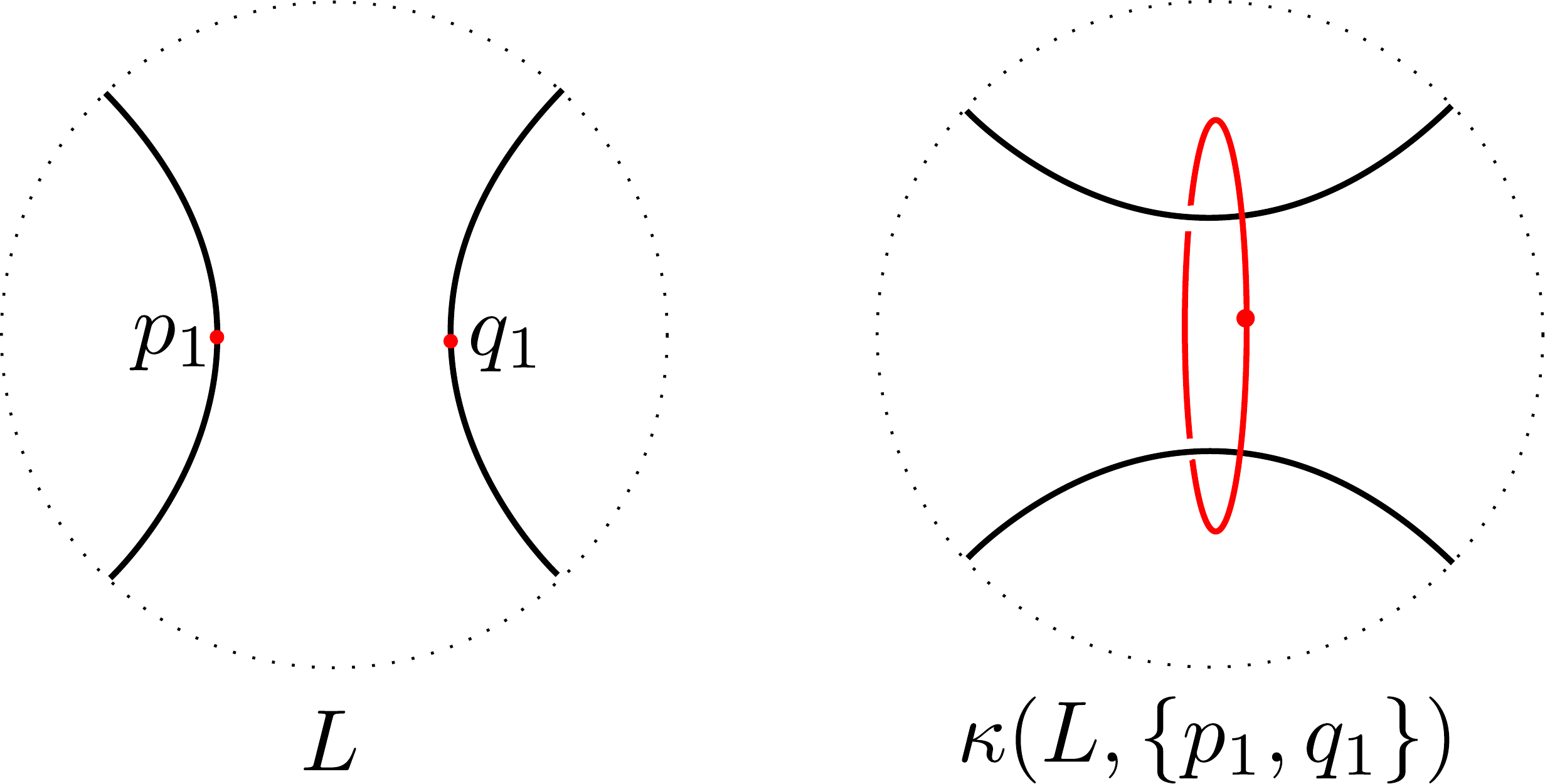}
    \caption{Local picture of knotification using dotted circle notation. A 1-handle with feet on $p_1$ and $q_1$ is added and the band along the core of the 1-handle is summed on the link $L$ to form a link $\kappa(L, \left\{p_1,q_1\right\})$ inside $\kappa(Y, \left\{p_1,q_1\right\})\cong Y\connsum (S^1\times S^2)$.}
    \label{fig:knotification}
    \end{figure}
\end{center}

This process can be repeated to yield a link $\kappa(L, \left\{p_i, q_i \right\})\subseteq Y\connsum^k{(S^1\times S^2)}$. When we repeat this process to the end, the resulting knot $\kappa(L)$ (or the process itself) is called \textit{knotification} \cite[Chapter 2.1]{ozsvathHolomorphicDisksKnot2004} and independent of the choice of the pairs of points and the band sums, justifying the omission of points $p_i, q_i$ in the notation. For the same reason, a one-step process as above only depends on the link components $p_i$'s and $q_i$'s are lying on.

We abuse the notation and define the following:
\begin{defi}
 The one-step process as above from $L\subseteq Y$ to $\kappa(L,\left\{p_1,q_1\right\})\subseteq Y\# (S^1\times S^2)$ is called \textit{knotification} (despite the fact that it may not result in a knot).
\end{defi}
\begin{rmk}
A knotification process only depends on the components that the feet of the 1-handle are lying on.
\end{rmk}
Throughout this paper, we restrict our attention to links in $S^3$ and their knotifications. There is a straightforward generalization of the link Floer homology (of links in $S^3$) to null-homologous links in $\connsum^n(S^1\times S^2)$, which we briefly review here. A curious reader may check \cite[Section 5]{zemkeLinkCobordismsAbsolute2019} for the details on the gradings of link Floer homology in general 3-manifolds.

By the construction, the link Floer homology groups as in \cite{ozsvathHolomorphicDisksLink2008} are endowed with a natural $spin^c$ grading. In case of links in $S^3$, there is a convenient identification of the $spin^c$ grading $Spin^c(L)$ with the affine lattice 
\begin{align*}
    \mathbb{H}(L):=\left\{\sum_i a_i[\mu_i]\middle| 2a_i+\lk(L\setminus L_i, L_i)\textrm{\;is\;even}\right\},    
\end{align*}
defined up to a global shift (to guarantee the central symmetry of the grading) by the following formula:
\begin{align*}
    Spin^c(L) &\to \mathbb{H}(L)\\ 
    \mathfrak{t} &\mapsto \sum_i \frac{\left<c_1(\mathfrak{t}), [F_i] \right>-\lk(L\setminus L_i, L_i)}{2}[\mu_i],
\end{align*}
where $F_i$ is a generalized Seifert surface such that $\partial F_i=L_i$. 
For general null-homologous links, the map $f:Spin^c(L)\to \mathbb{H}(L)$ defined as above is only surjective. But we can still pass the $spin^c$ grading through this map to assign a $\mathbb{H}(L)$-grading on the link Floer homology:\\
For $h\in \mathbb{H}(L)$,
\begin{align*}
    \widehat{HFL}(\connsum^n(S^1\times S^2), L, h) = \bigoplus_{f(\mathfrak{t})= h}\widehat{HFL}(\connsum^n(S^1\times S^2), L, \mathfrak{t}).
\end{align*}
This is the grading (Alexander multigrading) which we use throughout this paper.

The importance of the notion of knotification can be understood by the following theorem.
\begin{thm}[Generalization of {\cite[Theorem 1.1]{ozsvathHolomorphicDisksLink2008}}]\label{thm:hflofknotification}
There is a natural identification
\begin{align*}
    \widehat{HFL}(\kappa(L, \left\{p,q\right\}), h)\cong \bigoplus_{o(h')=h}\widehat{HFL}(L, h'),
\end{align*}
where $o:\mathbb{H}(L)\to \mathbb{H}(\kappa(L,\left\{p,q\right\}))$ is the homomorphism defined on the level of basis by the canonical identification $[\mu_i]\mapsto [\mu_i]$.
\begin{proof}
This is a straightforward adaptation of \cite[Proof of Theorem 1.1]{ozsvathHolomorphicDisksLink2008} where only a single 1-handle is attached on the Heegaard diagram for the link.
\end{proof}
\end{thm}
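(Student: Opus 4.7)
The plan is to adapt \cite[Proof of Theorem 1.1]{ozsvathHolomorphicDisksLink2008} by isolating a single 1-handle attachment and tracking the induced map on the Alexander multigrading $\mathbb{H}$.

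First I would fix a multi-pointed Heegaard diagram $(\Sigma, \boldsymbol{\alpha}, \boldsymbol{\beta}, \mathbf{w}, \mathbf{z})$ for $L \subseteq S^3$ in which a specific $w$-basepoint $w_p$ lies near $p \in L_i$ and another $w$-basepoint $w_q$ lies near $q \in L_j$. To model the knotification on the diagram, I attach a handle to $\Sigma$ joining small neighborhoods of $w_p$ and $w_q$, then extend $\boldsymbol{\alpha}$ and $\boldsymbol{\beta}$ by one meridian of the new handle and one small dual curve meeting it transversely in a single point. This produces a multi-pointed Heegaard diagram for $S^3 \# (S^1 \times S^2)$. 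The single intersection between the two new curves forces a canonical bijection between generators of the old and new chain complexes, with matching Maslov indices and, by standard handleslide and isotopy arguments, matching holomorphic disk counts.

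Next I would reinterpret the band-sum step of the knotification: on the stabilized Heegaard surface it amounts to deleting the two basepoints $w_p$ and $w_q$ and inserting a single new $w$-basepoint on the core of the new band. The resulting multi-pointed Heegaard diagram now represents $(S^3 \# (S^1 \times S^2), \kappa(L, \{p,q\}))$, and the generator identification from the previous step matches the link Floer chain complex of $L$ with that of $\kappa(L, \{p,q\})$ up to a grading shift.

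Finally, to pin down the Alexander gradings I would track $f : Spin^c \to \mathbb{H}$ through the construction. For $k \neq i, j$ the meridian $[\mu_k]$ of $L_k$ is untouched, so its coordinate in $\mathbb{H}$ is preserved; for $k = i$ or $j$, a small modification of the generalized Seifert surface $F_k$ absorbing the attached 1-handle turns it into a generalized Seifert surface for the merged component of $\kappa(L, \{p,q\})$, and the Chern class evaluations add. This is exactly the effect of the prescribed $o$. Grouping generators by their image in $\mathbb{H}(\kappa(L, \{p,q\}))$ then yields the stated direct sum decomposition.

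The main obstacle I expect is this last step. Since $f$ is only surjective once $S^1 \times S^2$-summands are present, I have to verify that the new $Spin^c$ degree of freedom gained from the knotification accounts exactly for the kernel of $o$, which is spanned by $[\mu_i] - [\mu_j]$. Checking that the $Spin^c$-fiber over each $h \in \mathbb{H}(\kappa(L, \{p,q\}))$ decomposes compatibly with $o^{-1}(h) \subseteq \mathbb{H}(L)$, for instance via the explicit grading formulas in \cite[Section 5]{zemkeLinkCobordismsAbsolute2019}, should close the argument.
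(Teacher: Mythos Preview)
Your overall strategy---isolate the single 1-handle step in \cite[Proof of Theorem 1.1]{ozsvathHolomorphicDisksLink2008} and track the Alexander multigrading---is exactly the paper's approach; the paper's proof is in fact nothing more than a one-line pointer to that argument. Your grading discussion is also more thorough than the paper's, and your identification of $\ker o$ with the new $Spin^c$ freedom is correct.

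However, the explicit Heegaard-diagram construction you write down is not the one that actually realizes the knotification, and as stated it does not produce a valid multi-pointed diagram. Two issues:
\begin{itemize}
\item Attaching the handle near \emph{two $w$-basepoints}, then adding a new $\alpha$ (meridian of the handle) and a new $\beta$ (dual curve) meeting once, is an ordinary index-$1/2$ stabilization. It does not change the ambient $3$-manifold, so after that step you still have a diagram for $L\subset S^3$, not for anything in $S^3\#(S^1\times S^2)$.
\item Your basepoint bookkeeping ``delete $w_p,w_q$ and insert one new $w$'' leaves $l-1$ $w$-basepoints against $l$ $z$-basepoints, so the result is not a balanced multi-pointed diagram for any link.
\end{itemize}

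The construction that works (and which the paper later spells out in the proof of Theorem~\ref{thm:knotification}) is: take the 1-handle with one foot in the region of a $z$-basepoint $z_1$ on $L_i$ and the other in the region of a $w$-basepoint $w_2$ on $L_j$, add \emph{no} new $\alpha$- or $\beta$-curves, and simply discard $z_1$ and $w_2$. The curve count stays at $g+l-1$ while the genus goes to $g+1$ and the number of basepoint pairs drops to $l-1$, so the diagram is balanced, the $\alpha$- and $\beta$-handlebodies each acquire a free $1$-handle, and the resulting diagram represents $\kappa(L,\{p,q\})\subset S^3\#(S^1\times S^2)$. The generators $\mathbb{T}_\alpha\cap\mathbb{T}_\beta$ are then \emph{literally} unchanged, which is what makes the chain-level identification immediate and the neck-stretching argument of \cite{ozsvathHolomorphicDisksLink2008} go through. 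Once you replace your diagrammatic description with this one, the rest of your outline (including the grading analysis) is fine.
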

\begin{rmk}
\cite[Theorem 1.1]{ozsvathHolomorphicDisksLink2008} follows as a special case of Theorem \ref{thm:hflofknotification} because for each $s\in \ZZ$,
\begin{align*}
    \widehat{HFK}(L,s)\cong \widehat{HFK}(\kappa(L),s)\cong\widehat{HFL}(\kappa(L),s)
\end{align*}
by the definition of $\widehat{HFK}$.
\end{rmk}
In this regard, the rank of $\widehat{HFL}(L)$ is equal to the rank of $\widehat{HFK}(L)$, so there is no ambiguity in the terminology \textit{link Floer rank} of $L$.
\begin{defi}
For a link $L\subseteq S^3$, its \textit{link Floer rank} $\lfr(L)$ is the dimension of the $\FF$-vector space $\widehat{HFL}(L)$.
\end{defi}

\subsection{Properties of link Floer homology}
Here we collect several basic properties of the link Floer homology group $\widehat{HFL}(L,h)$ of a link $L\subseteq S^3$. For the ease of notation, we identify the element $h=\sum_i h_i[\mu_i]\in \mathbb{H}(L)$ with the tuple $(h_1,\cdots, h_l)$ of half-integers.
\begin{thm}[Central symmetry]
\begin{align*}
    \widehat{HFL}(L, h)\cong \widehat{HFL}(L, -h)
\end{align*}
holds for all $h\in \mathbb{H}(L)$.
\end{thm}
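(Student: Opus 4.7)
The plan is to invoke the standard conjugation symmetry of Heegaard Floer homology, adapted to the multi-pointed link setting, and then translate the resulting involution on $\mathrm{Spin}^c$ structures into negation on $\mathbb{H}(L)$.

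First, I would start with an admissible multi-pointed Heegaard diagram $(\Sigma, \boldsymbol{\alpha}, \boldsymbol{\beta}, \mathbf{w}, \mathbf{z})$ for $(S^3, L)$. The reflected diagram $(-\Sigma, \boldsymbol{\beta}, \boldsymbol{\alpha}, \mathbf{w}, \mathbf{z})$, obtained by swapping the two handlebody sides and reversing the orientation of $\Sigma$, presents the same pointed link. As in the closed case, the identity map on intersection points identifies the two link Floer chain complexes (the holomorphic disc counts agree because $J$-holomorphic discs in $\mathrm{Sym}^g(\Sigma)$ correspond to $(-J)$-holomorphic discs in $\mathrm{Sym}^g(-\Sigma)$ after swapping source and target), and this identification sends the relative $\mathrm{Spin}^c$ class $\mathfrak{s}$ to its conjugate $J\mathfrak{s}$, because the nonvanishing vector field used to define $\mathfrak{s}$ from an intersection point flips sign under the reflection. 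This yields a canonical isomorphism $\widehat{HFL}(L,\mathfrak{s})\cong \widehat{HFL}(L, J\mathfrak{s})$ for each $\mathfrak{s}\in \mathrm{Spin}^c(L)$.

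Second, I would translate $J$ into an involution on $\mathbb{H}(L)$. Since $c_1(J\mathfrak{s}) = -c_1(\mathfrak{s})$, the formula
\[
f(\mathfrak{s}) = \sum_i \frac{\langle c_1(\mathfrak{s}), [F_i]\rangle - \lk(L\setminus L_i, L_i)}{2}[\mu_i]
\]
from the excerpt gives $f(J\mathfrak{s}) = -f(\mathfrak{s}) - \sum_i \lk(L\setminus L_i, L_i)[\mu_i]$ before absorbing any shift. The excerpt explicitly notes that $f$ is defined only up to a global shift, chosen precisely so that the grading is centrally symmetric; with the conventional shift $\tfrac{1}{2}\sum_i \lk(L\setminus L_i, L_i)[\mu_i]$ built in, this reduces to $f(J\mathfrak{s}) = -f(\mathfrak{s})$ on the nose. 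Summing the isomorphisms of Step~1 over every $\mathfrak{s}$ in the preimage of a fixed $h\in\mathbb{H}(L)$ under $f$ then yields $\widehat{HFL}(L,h)\cong \widehat{HFL}(L,-h)$.

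The hard part is Step~1: carefully checking that reflecting the Heegaard surface preserves the differential and really induces the involution $J$ on \emph{relative} $\mathrm{Spin}^c$ structures in the presence of both $\mathbf{w}$- and $\mathbf{z}$-basepoints. This is well-trodden ground in the closed and knot cases, and the extension to multi-pointed link Heegaard diagrams is routine bookkeeping, but it is the only step with genuine content; after that, the passage from $\mathrm{Spin}^c(L)$ to $\mathbb{H}(L)$ is purely algebraic and is handled by the shift convention that the excerpt has already flagged.
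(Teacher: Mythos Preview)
The paper does not supply a proof of this statement; it is listed in Section~2.2 among the ``basic properties of link Floer homology'' quoted from Ozsv\'ath--Szab\'o's foundational papers, with no argument given. There is therefore nothing in the paper to compare your proposal against.

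Your outline is the standard conjugation-symmetry argument and is correct in substance. One small point worth tightening: with the usual conventions the reflected diagram $(-\Sigma,\boldsymbol{\beta},\boldsymbol{\alpha},\mathbf{w},\mathbf{z})$ presents the link with all component orientations reversed rather than $L$ itself; the clean fix is to swap the basepoint roles as well and use $(-\Sigma,\boldsymbol{\beta},\boldsymbol{\alpha},\mathbf{z},\mathbf{w})$, which does present the same oriented link. Since the $\widehat{CFL}$ differential counts only discs with $n_{\mathbf{w}}=n_{\mathbf{z}}=0$, this swap is invisible at the chain level, and your Step~2 computation (negation of $c_1$ under $J$, followed by the centering shift) goes through unchanged.
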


\begin{thm}[Decategorification]
For a $l$-component link $L\subseteq S^3$ with $l>1$,
\begin{align*}
    \chi(\widehat{HFL}(L)):=\sum_{d\in \ZZ, h\in \mathbb{H}}(-1)^d\rank\widehat{HFL}(L,h)\cdot t_1^{h_1}\cdots t_l^{h_l}=\prod_{i=1}^l(t_i^{\frac{1}{2}}-t_i^{-\frac{1}{2}})\cdot\Delta (L)
\end{align*}
holds where $\Delta(L)$ is the multi-variable Alexander polynomial of $L$.
\end{thm}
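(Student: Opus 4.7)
\emph{Plan.} I would prove the formula by direct computation of the graded Euler characteristic on a multi-pointed Heegaard diagram, following the template of the original single-variable argument of Ozsv\'ath--Szab\'o \cite{ozsvathHolomorphicDisksKnot2004}. The only feature new to the multi-component setting is that $L$ carries $l$ pairs of basepoints, one $(w_i,z_i)$ per component, and this is precisely what produces the prefactor $\prod_i(t_i^{1/2}-t_i^{-1/2})$.

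\emph{Key steps.} First, fix a multi-pointed Heegaard diagram $(\Sigma,\boldsymbol\alpha,\boldsymbol\beta,\boldsymbol w,\boldsymbol z)$ of genus $g$ with $g+l$ alpha- and beta-curves. The generators of $\widehat{CFL}(L)$ are tuples $\mathbf{x}\in\mathbb{T}_\alpha\cap\mathbb{T}_\beta$, each carrying a Maslov grading $M(\mathbf{x})\pmod{2}$ and an Alexander multigrading $A(\mathbf{x})\in\mathbb{H}(L)$. Second, check invariance of the weighted generator sum
\[
\sum_{\mathbf{x}}(-1)^{M(\mathbf{x})}\,t_1^{A_1(\mathbf{x})}\cdots t_l^{A_l(\mathbf{x})}
\]
under Heegaard moves; this is routine since the differential preserves $A$ and shifts $M$ by $1$. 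Third, evaluate the sum on a convenient diagram (for instance one built from a planar link projection by the Ozsv\'ath--Szab\'o recipe), where elementary manipulations turn it into a Fox--Jacobian determinant for a Wirtinger presentation of $\pi_1(S^3\setminus L)$. This determinant equals $\prod_i(t_i^{1/2}-t_i^{-1/2})\Delta(L)$ up to multiplicative units, and the central symmetry $\widehat{HFL}(L,h)\cong\widehat{HFL}(L,-h)$ pins down the normalization.

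\emph{Main obstacle.} The real work is concentrated in the third step: extracting the prefactor $\prod_i(t_i^{1/2}-t_i^{-1/2})$ cleanly and verifying the overall sign when $l>1$, while tracking Maslov indices and basepoint contributions simultaneously. A softer alternative that bypasses the multi-variable state-sum is to induct on $l$ via the knotification isomorphism of Theorem~\ref{thm:hflofknotification}. Each knotification step merges two components and identifies their meridians through the map $o\colon\mathbb{H}(L)\to\mathbb{H}(\kappa(L))$; on the Alexander side, the corresponding specialization $t_i=t_j=t$ satisfies a classical band-sum relation with $\Delta(\kappa(L))$. Combining this recursion with the base case $\chi(\widehat{HFK}(K))=\Delta(K)$ of \cite{ozsvathHolomorphicDisksKnot2004}, and using central symmetry to resolve multiplicative ambiguity, reconstructs the full multi-variable identity without ever writing down a Jacobian. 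The restriction $l>1$ is then natural: for a single knot there is only one pair of basepoints, so no additional prefactor appears.
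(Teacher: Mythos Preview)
The paper does not prove this statement itself: it is listed in Section~2.2 among standard properties of $\widehat{HFL}$ collected from the literature, and is originally Theorem~1.3 of Ozsv\'ath--Szab\'o \cite{ozsvathHolomorphicDisksLink2008}. Your primary approach---evaluating the weighted generator sum on a Heegaard diagram built from a link projection and matching it against a combinatorial model for $\Delta(L)$---is essentially the proof given there (they use Kauffman's state model rather than the Fox--Jacobian formalism, but the spirit is the same).

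Your ``softer alternative'' via knotification, however, has a real gap. The identification of Theorem~\ref{thm:hflofknotification} collapses the Alexander lattice along $o$, so on Euler characteristics it performs the specialization $t_i=t_j$. By induction you would therefore only learn the value of $\chi(\widehat{HFL}(L))$ after setting two of the variables equal, and a multivariable Laurent polynomial is not determined by such specializations: for instance $t_1^{1/2}t_2^{-1/2}-t_1^{-1/2}t_2^{1/2}$ and $0$ agree on $t_1=t_2$. There is also a subtler problem on the target side: $\kappa(L)$ lives in $\#^{\,k}(S^1\times S^2)$ rather than $S^3$, so invoking a ``classical band-sum relation'' for $\Delta$ already requires care beyond what you indicate. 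In short, knotification cannot by itself bootstrap the single-variable base case up to the full multivariable identity; you genuinely need an argument---such as the direct state-sum computation in your first plan---that sees all $l$ variables simultaneously.
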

\begin{rmk}\label{rmk:even}
Especially, $\lfr(L)$ is even when $l>1$ and odd when $l=1$.
\end{rmk}

\begin{thm}[Connected sum formula, {\cite[Theorem 1.4]{ozsvathHolomorphicDisksLink2008}}]\label{thm:connsum}
For links $L_1,L_2\subseteq S^3$, we have the following isomorphisms:
\begin{align*}
    \widehat{HFL}(L_1\# L_2, h)&\cong \bigoplus_{h'\#h''=h}\left(\widehat{HFL}(L_1, h')\otimes \widehat{HFL}(L_2, h'')\right),\\
    \widehat{HFL}(L_1\sqcup L_2, h)&\cong \widehat{HFL}(L_1,h')\otimes \widehat{HFL}(L_2,h'')\otimes \FF^2,
\end{align*}
where in the first isomorphism $h'\#h''$ is the image of $(h', h'')$ under the natural map $\mathbb{H}(L_1)\otimes \mathbb{H}(L_2)\to \mathbb{H}(L)$ and in the second isomorphism $h'$(resp. $h''$) is the image of $h\in \mathbb{H}(L)$ under the natural projection map $\mathbb{H}(L)\to \mathbb{H}(L_1)$(resp. $\mathbb{H}(L)\to\mathbb{H}(L_2)$).
\end{thm}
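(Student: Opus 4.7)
The plan is to mirror the Heegaard-diagrammatic proof of the classical connected sum formula for $\widehat{HF}$ of closed 3-manifolds, while tracking the multi-pointed link data and the Alexander multigrading. In both cases I would start from multi-pointed Heegaard diagrams $\mathcal{H}_i = (\Sigma_i, \boldsymbol{\alpha}_i, \boldsymbol{\beta}_i, \mathbf{w}_i, \mathbf{z}_i)$ for $(S^3, L_i)$ with the usual normalization on curve counts, build a Heegaard diagram for the resulting link by joining the two surfaces along a neck, and then use a neck-stretching degeneration to localize holomorphic disks to one side of the neck, so that the differential on the combined diagram splits as a tensor product.

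For the first isomorphism (connected sum along components $K_j \subseteq L_j$), I would pick adjacent basepoints $z_1' \in \mathbf{z}_1$ on $K_1$ and $w_2' \in \mathbf{w}_2$ on $K_2$, perform the surface connected sum $\Sigma_1 \# \Sigma_2$ at disk neighborhoods of these basepoints, and discard the pair $\{z_1', w_2'\}$. The resulting tuple is a Heegaard diagram for $(S^3, L_1 \# L_2)$; its generators are in bijection with pairs $(\mathbf{x}_1, \mathbf{x}_2)$, and the Alexander and $\mathrm{spin}^c$ data behave additively once one identifies the two meridians of the components being merged. This yields the stated direct sum over $h' \# h'' = h$.

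For the split-union formula, I would instead perform the surface connected sum at a pair of interior points disjoint from all curves and basepoints, preserving $\mathbf{w}_1 \cup \mathbf{w}_2$ and $\mathbf{z}_1 \cup \mathbf{z}_2$ as the basepoint sets. The curve counts now fall one short of the multi-pointed requirement, so I would add a single extra pair $(\alpha_0, \beta_0)$ supported inside the connected sum neck, with $\beta_0$ a small Hamiltonian perturbation of $\alpha_0$ meeting it transversally in two points $p_\pm$. This gives a valid Heegaard diagram for $(S^3, L_1 \sqcup L_2)$, whose generators take the form $(\mathbf{x}_1, \mathbf{x}_2, p_\pm)$. The two small bigons between $p_+$ and $p_-$ cancel modulo $2$, so $(\alpha_0, \beta_0)$ contributes a factor $V = \FF \oplus \FF$ with trivial differential, producing the extra $\FF^2$; since the new curves lie entirely in the neck, the Alexander gradings still split as $h = h' + h''$ via projection onto each factor.

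The main obstacle I anticipate is the analytic step of justifying the neck-stretching degeneration in the multi-pointed setting to rule out disks straddling the connected sum region, together with making the bijection of $\mathrm{spin}^c$ structures explicit enough to identify the Alexander multigradings as claimed. Both steps are standard adaptations of the arguments in \cite{ozsvathHolomorphicDisksKnot2004, ozsvathHolomorphicDisksLink2008}, but require some care with the grading conventions set up in Section 2.1.
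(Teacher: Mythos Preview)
The paper does not give its own proof of this theorem; it is stated in Section~2.2 as a preliminary result quoted from \cite[Theorem~1.4]{ozsvathHolomorphicDisksLink2008}, with no argument supplied. Your sketch is essentially the standard Heegaard-diagrammatic proof from that reference (neck-sum of multi-pointed diagrams, neck-stretching to localize disks, and an extra $(\alpha_0,\beta_0)$ pair for the disjoint union), so there is nothing to compare against in the present paper.
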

\begin{thm}[Component removal spectral sequence, {\cite[Proposition 7.1]{ozsvathHolomorphicDisksLink2008}}]\label{thm:CFSS}
For a link $L$ and its component $L_1\subseteq L$, we can find a differential $D$ (that is, a homomorphism $D$ with $D^2=0$) filtered with respect to the first Alexander grading on $\widehat{HFL}(L)$ satisfying
\begin{align}
    H_*(\widehat{HFL}(L),D)\cong \widehat{HFL}(L\setminus L_1)\otimes \FF^2.
\end{align}
$D$ is homogeneous with respect to the other Alexander gradings in the sense that if $x\in \widehat{HFL}(L)$ has degree $h=(h_1,\cdots, h_l)\in \mathbb{H}(L)$, $D(x)$ has degree $(h_2-\frac{1}{2}\lk(L_1,L_2), \cdots, h_l-\frac{1}{2}\lk(L_1,L_l))\in \mathbb{H}(L\setminus L_1)$.
\end{thm}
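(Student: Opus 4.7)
The plan is to realize $D$ via a filtered Heegaard-diagram construction in the spirit of Ozsv\'ath--Szab\'o.

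First, I would fix a multi-pointed Heegaard diagram $(\Sigma,\bm{\alpha},\bm{\beta},\mathbf{w},\mathbf{z})$ for $L$ in which each component $L_i$ carries exactly one pair of basepoints $(w_i,z_i)$. The chain complex $CFL(L)$ with its standard differential $\partial$ (counting holomorphic disks $\phi$ of Maslov index $1$ with $n_{w_i}(\phi)=n_{z_i}(\phi)=0$ for every $i$) computes $\widehat{HFL}(L)$. I would then introduce a second differential $\partial'$ counting Maslov-$1$ disks subject to the weaker constraint $n_{w_i}(\phi)=n_{z_i}(\phi)=0$ only for $i>1$, thereby allowing disks to pass through $w_1$ and $z_1$. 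A routine disk-degeneration argument gives $(\partial')^2=0$, and since crossing $w_1$ or $z_1$ shifts only the Alexander grading $A_1$ (strictly lowering it on the $\partial'-\partial$ contribution), $\partial'$ is filtered by $A_1$ with associated graded equal to $\partial$.

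The main content will be the identification
\begin{align*}
H_*(CFL(L),\partial')\cong \widehat{HFL}(L\setminus L_1)\otimes \FF^2.
\end{align*}
The idea is to reinterpret the pair $(w_1,z_1)$ as \emph{free} basepoints: once they are ignored in the differential, the remaining data is a multi-pointed Heegaard diagram for $L\setminus L_1$ with one extra pair of free basepoints, and each free basepoint pair contributes a tensor factor of $\FF^2$ (the local model behind the disjoint-union formula in Theorem \ref{thm:connsum}).

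Finally, I would pass $\partial'$ down to $\widehat{HFL}(L)$ by homological transfer: choosing an $\FF$-splitting of the projection $\ker\partial\twoheadrightarrow \widehat{HFL}(L)$ produces an induced operator $D$ on $\widehat{HFL}(L)$ satisfying $D^2=0$ and $H_*(\widehat{HFL}(L),D)\cong H_*(CFL(L),\partial')$. The $A_1$-filtered, $(A_2,\dots,A_l)$-graded structure of $\partial'$ descends directly to $D$, and the shift by $\tfrac{1}{2}\lk(L_1,L_i)$ in the statement is an accounting artifact of identifying the affine lattices $\mathbb{H}(L)$ and $\mathbb{H}(L\setminus L_1)$ under the different congruence conditions recalled in Section~2.1. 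The main obstacle I expect is the transfer step: promoting the filtered chain-level picture to an honest single differential on the already-taken homology requires invoking formality of complexes over a field together with an explicit chain homotopy (equivalently, packaging the full $A_1$-spectral sequence into one summed differential via the perturbation lemma). A secondary difficulty is the bookkeeping of the linking-number shift between $\mathbb{H}(L)$ and $\mathbb{H}(L\setminus L_1)$, which is conceptually clear but notationally delicate.
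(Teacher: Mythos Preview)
Your overall architecture is the correct one (and is exactly how the paper, following Ozsv\'ath--Szab\'o, understands this spectral sequence: see the Remark after the statement and the discussion in the proof of Proposition~\ref{prop:relative}), but there is a genuine error in the construction of $\partial'$.

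You relax \emph{both} $w_1$ and $z_1$, and then claim that every disk counted by $\partial'-\partial$ strictly lowers $A_1$. This is false: for $\phi\in\pi_2(\mathbf{x},\mathbf{y})$ one has $A_1(\mathbf{x})-A_1(\mathbf{y})=n_{z_1}(\phi)-n_{w_1}(\phi)$, so disks crossing $w_1$ \emph{raise} $A_1$ while disks crossing $z_1$ lower it. In particular $\partial'$ as you define it is not filtered by $A_1$ (and indeed a disk with $n_{w_1}(\phi)=n_{z_1}(\phi)>0$ preserves $A_1$, so the associated graded is not $\partial$ either). Consequently the homological-transfer step cannot produce a filtered $D$ on $\widehat{HFL}(L)$ from this $\partial'$.

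The fix is to relax only the $z_1$ basepoint and keep $n_{w_1}(\phi)=0$. Then $\partial'-\partial$ counts only disks with $n_{z_1}(\phi)>0$, which genuinely strictly lower $A_1$, and the associated graded is $\partial$ on the nose. The homology identification then reads correctly: the resulting diagram is a \emph{link-minimal} diagram for $L\setminus L_1$ with a single \emph{free} $w$-basepoint $w_1$ (not a ``free pair''), and one free basepoint contributes exactly one $\FF^2$ tensor factor via a free index-zero/three stabilization, as in the proof of Proposition~\ref{prop:relative}. With this correction your transfer/perturbation argument goes through, and the linking-number shift you flag is exactly the lattice re-indexing between $\mathbb{H}(L)$ and $\mathbb{H}(L\setminus L_1)$.
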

\begin{rmk}
This can also be understood as the effect of removing a $z$ basepoint from the Heegaard diagram, see proof of Proposition \ref{prop:relative}.
\end{rmk}

\begin{cor}[Rank inequality]
For a $l$-component link $L$ and its component $L_1\subset L$,
\begin{align*}
    \lfr(L)\geq 2\cdot \lfr(L\setminus L_1).
\end{align*}
In particular, for a $l$-component link $L\subseteq S^3$,
\begin{align*}
    \lfr(L)\geq 2\cdot \lfr(L\setminus L_1)\geq \cdots \geq 2^{l-1}\cdot\lfr(L_l) \geq 2^{l-1}
\end{align*}
\cite[Theorem 1.2]{ozsvathHolomorphicDisksLink2008}.
\end{cor}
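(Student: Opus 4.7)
The plan is to derive the corollary as an almost immediate consequence of the component removal spectral sequence (Theorem \ref{thm:CFSS}) together with the standard fact that homology cannot have rank larger than the underlying chain group.

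Concretely, first I would invoke Theorem \ref{thm:CFSS} to produce a differential $D$ on $\widehat{HFL}(L)$ satisfying
\begin{align*}
H_*(\widehat{HFL}(L), D) \cong \widehat{HFL}(L\setminus L_1)\otimes \FF^2.
\end{align*}
Since for any differential $D$ on a finite-dimensional $\FF$-vector space $V$ one has $\dim_\FF H_*(V,D)\le \dim_\FF V$, this gives
\begin{align*}
\lfr(L) = \dim_\FF \widehat{HFL}(L) \;\geq\; \dim_\FF H_*(\widehat{HFL}(L), D) \;=\; 2\cdot \lfr(L\setminus L_1),
\end{align*}
which is the first displayed inequality.

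For the iterated inequality, I would remove components one at a time: choose any ordering $L_1, L_2,\dots, L_l$ of the components of $L$ and apply the previous step $l-1$ times to obtain
\begin{align*}
\lfr(L)\;\geq\; 2\cdot \lfr(L\setminus L_1)\;\geq\; 4\cdot \lfr(L\setminus (L_1\cup L_2)) \;\geq\; \cdots \;\geq\; 2^{l-1}\cdot \lfr(L_l).
\end{align*}
It then remains to observe $\lfr(L_l)\geq 1$: since $L_l$ is a knot, the decategorification theorem specialized to the one-component case gives $\chi(\widehat{HFK}(L_l)) = \Delta(L_l)(t)$ evaluated appropriately, which is $\pm 1$ at $t=1$, forcing $\lfr(L_l)\geq 1$. (Alternatively, by the unknot detection theorem or by the nonvanishing of $\widehat{HFK}$ in the top Alexander grading, $\widehat{HFK}$ of any knot is nonzero.)

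There is no real obstacle here; the entire content is packaged inside Theorem \ref{thm:CFSS}. The only mild subtlety is being careful that $D$ need not be bigraded in the first Alexander direction, only filtered — but this is irrelevant to a rank statement, since filtered differentials still satisfy $\dim H_*\le \dim V$. The homogeneity of $D$ in the remaining Alexander gradings, stated in Theorem \ref{thm:CFSS}, is not needed for the rank bound but will be essential later in Section 6 when analyzing the equality case in Proposition \ref{prop:relative}.
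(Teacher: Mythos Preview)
Your proposal is correct and matches the paper's intended argument: the corollary is stated immediately after Theorem~\ref{thm:CFSS} without a separate proof precisely because it follows from the elementary observation $\dim_\FF H_*(V,D)\le \dim_\FF V$ applied to that spectral sequence, exactly as you wrote. Your closing remark about the filtered-versus-bigraded distinction being irrelevant for the rank bound is also on point.
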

\begin{rmk}
\cite{niHomologicalActionsSutured2014} showed that only the $l$-component unlink achieves the equality in the second inequality.
\end{rmk}

\begin{rmk}\label{rmk:hopf}
In particular, if $L$ contains a knotted component, $\lfr(L)\geq 3\cdot 2^{l-1}>2^l$. As the disjoint union of the Hopf link and the $(l-2)$-component unlink has the link Floer rank $2^l$, such an $L$ does not have the second smallest link Floer rank.
\end{rmk}

\section{Homological action and detection of split link}
In this section, we recall the notion of homological action on the link Floer homology and link Floer homology with twisted coefficients. General references to this section are \cite[Chapter 8]{ozsvathHolomorphicDisksThreeManifold2004}, \cite{niNonseparatingSpheresTwisted2013} with the caveat that they only consider $\widehat{HF}$ and $\widehat{HFK}$ respectively, and in \cite{niNonseparatingSpheresTwisted2013} certain special case of the following construction.

\subsection{Homological action}
Suppose that $(\Sigma, \bm{\alpha},\bm{\beta},\mathbf{w},\mathbf{z})$ is a Heegaard diagram representing the link $L\subseteq S^3$. Then, by \cite{juhaszHolomorphicDiscsSutured2006}, the sutured Floer homology of the link exterior $SFH(S^3(L))$ is isomorphic to $\widehat{HFL}(L)$. Thus the homological action \cite{niHomologicalActionsSutured2014} on $SFH(S^3(L))$ defines the corresponding action on $\widehat{HFL}(L)$.

More precisely, let $\zeta\subseteq \Sigma$ be a relative 1-cycle on $\Sigma$. Then $\zeta$ defines a map $e_\zeta:\pi_2(\mathbf{x},\mathbf{y})\to \ZZ$ by the following formula:
\begin{align*}
    e_\zeta(\phi)=\zeta\cdot \partial_\alpha \phi,   
\end{align*}
where $\partial_\alpha\phi=(\partial \phi)\cap \mathbb{T}_\alpha$ is the components of $\partial \phi$ which lies on $\bm{\alpha}$, interpreted as a multi-arc in $\Sigma$. Using this map, $\zeta$ acts on the generator $\mathbf{x}\in \mathbb{T}_\alpha\cap \mathbb{T}_\beta$ of $\widehat{CFL}(L)$ as
\begin{align}\label{eq:0}
    \zeta\cdot \mathbf{x} = \sum_{\mathbf{y}\in \mathbb{T}_\alpha\cap \mathbb{T}_\beta}\sum_{\phi\in \pi_2(\mathbf{x},\mathbf{y})}\# \Mo(\phi)e_\zeta(\phi)\mathbf{y}.
\end{align}
The action of $\zeta$ commutes with the differential of $\widehat{CFL}(L)$, hence there is an induced action on $\widehat{HFL}(L)$. \cite{niHomologicalActionsSutured2014} proved that the action of $\zeta$ on $\widehat{HFL}(L)$ only depends on the homology class $[\zeta]\in H_1(S^3, L)$ and squares to zero.

The following theorem connects the homological action on $\widehat{HFL}(Y,L)$ to the homological action on its knotification. Before stating the theorem, we introduce the following notation.

For a relative 1-cycle $\zeta\in H_1(S^3, L)$ and basepoints $p, q\in L$ on two different components of $L$, the closure $\tilde{\zeta}\in H_1(\kappa(S^3, \left\{p,q\right\}), \kappa(L,\left\{p,q\right\}))$ of $\zeta$ is defined as follows:
\begin{itemize}
    \item For each component of $\zeta$ connecting the two components where $p$ and $q$ are lying on, the corresponding component of $\tilde{\zeta}$ is the union of the component of $\zeta$ (extended to a neighborhood of $p$ and $q$ if necessary) and the core of the 1-handle.
    \item For all the other components of $\zeta$, the corresponding components of $\tilde{\zeta}$ are the natural inclusion.
\end{itemize}
\begin{center}
    \begin{figure}[ht]
    \includegraphics[width=0.9\columnwidth]{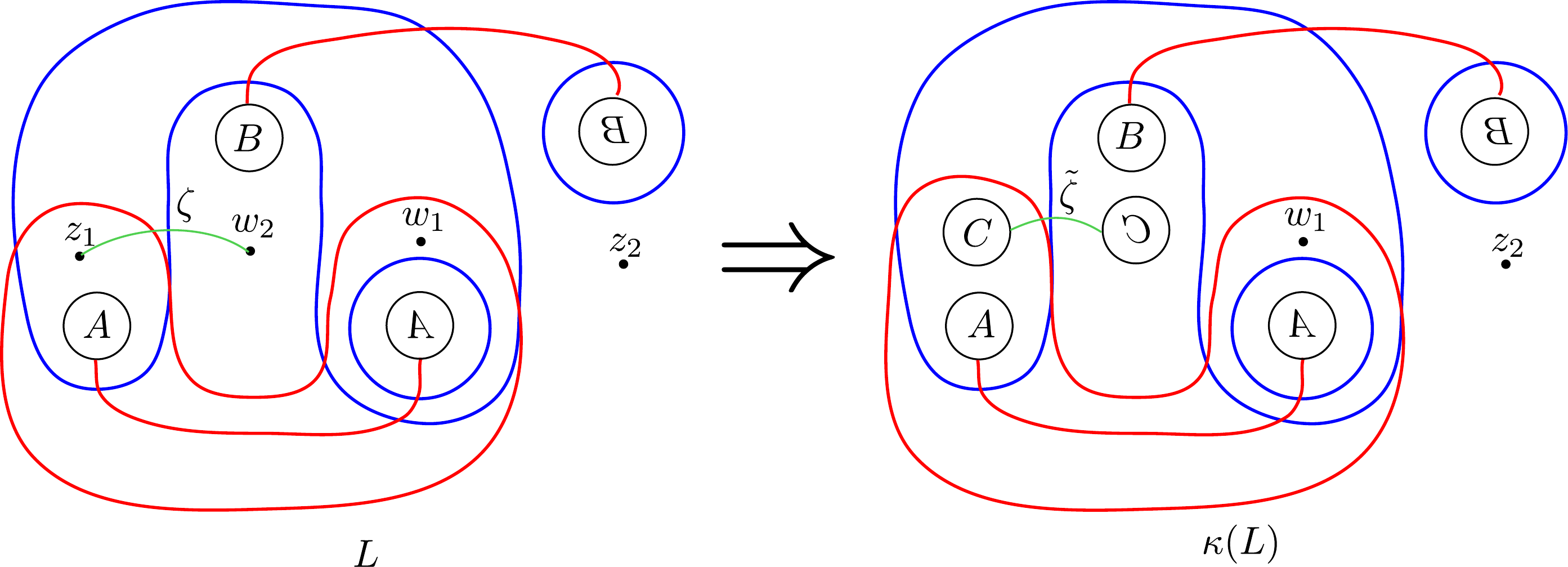}
    \caption{Illustration of the knotification process and closure construction  on Heegaard diagrams. The relative 1-cycle $\zeta$ connecting $z_1$ to $w_2$ is replaced with a closed loop $\tilde{\zeta}$ along the added 1-handle.}
    \label{fig:closure}
    \end{figure}
\end{center}
\begin{thm}\label{thm:knotification}
Under the identification of $\widehat{HFL}(Y,L)$ with $\widehat{HFL}(\kappa(Y,p,q), \kappa(L,p,q))$,
the homological action of the relative 1-chain $\zeta$ on $\widehat{HFL}(Y,L)$ is identified with the homological action of the closure $\tilde{\zeta}$ on $\widehat{HFL}(\kappa(Y,p,q), \kappa(L,p,q))$.
\begin{proof}
Let $(\Sigma, \bm{\alpha},\bm{\beta},\mathbf{w},\mathbf{z})$ be a Heegaard diagram for $(Y,L)$ and $L_1$(resp. $L_2$) be the link component where $p$(resp. $q$) lies on. Suppose that the arcs connecting $w_i$ with $z_i$ form $L_i$. Then by attaching a 1-handle connecting $p$ to $q$ on $\Sigma$ and disregarding $z_1$ and $w_2$, we obtain a Heegaard diagram $(\Sigma', \bm{\alpha}, \bm{\beta}, \mathbf{w}-\left\{w_2\right\}, \mathbf{z}-\left\{z_1\right\})$ for $(\kappa(Y, \left\{p,q\right\}), \kappa(L,\left\{p,q\right\}))$, where $\Sigma'$ is $\Sigma$ with the 1-handle attached. Note that the two Heegaard diagrams share the same set of generators $\mathbb{T}_\alpha\cap\mathbb{T}_\beta$, and the identification of $\widehat{HFL}(Y,L)$ with $\widehat{HFL}(\kappa(Y,p,q), \kappa(L,p,q))$ is induced from the identification of the generators.

We can choose complex structures on $\Sigma$ and $\Sigma'$ so that they coincide on every domain except those containing $z_1$ or $w_2$. Then in the formula
\begin{align}\label{eq:1}
    \tilde{\zeta}\cdot \mathbf{x} =     \sum_{\mathbf{y}\in\mathbb{T}_\alpha\cap\mathbb{T}_\beta}\sum_{\phi\in\pi_2(\mathbf{x},\mathbf{y})}\# \Mo(\phi)e_{\tilde{\zeta}}(\phi)\mathbf{y},
\end{align}
the contribution of the holomorphic disks $\phi$ with multiplicity 0 on the 1-handle and the corresponding holomorphic disks on formula \eqref{eq:0} is the same, as the intersection number $e_\zeta(\phi)=\zeta\cdot \partial_\alpha \phi$ is the same as $e_{\tilde{\zeta}}(\phi)$ and the count of moduli space $\# \Mo(\phi)$ is the same as well.

The other type of holomorphic disks, that is, those passing through the 1-handle region, may appear on the formula \eqref{eq:1}, but by stretching the necks of the 1-handle \cite[Proof of Theorem 1.1]{ozsvathHolomorphicDisksLink2008} one can ignore this case. More precisely, they showed that we can find a complex structure on $\Sigma'$ with the property that for all $\mathbf{x},\mathbf{y}\in \mathbb{T}_\alpha\cap \mathbb{T}_\beta$ and for all $\phi\in\pi_2(\mathbf{x},\mathbf{y})$ appearing in the formula for the differential, if $\# \Mo(\phi)\neq 0$, we must have $n_{z_1}(\phi)=n_{w_2}(\phi)=0$.
\end{proof}
\end{thm}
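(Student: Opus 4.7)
The plan is to realize the knotification at the Heegaard-diagram level and then match the two action formulas term by term: generators, moduli counts, and intersection numbers. First I would fix a Heegaard diagram $(\Sigma, \bm{\alpha}, \bm{\beta}, \mathbf{w}, \mathbf{z})$ for $(Y,L)$ in which $p$ lies on the arc of $L_1$ joining $w_1$ to $z_1$, and $q$ lies on the arc of $L_2$ joining $w_2$ to $z_2$. Attaching a 1-handle with feet in small neighborhoods of $p$ and $q$ produces a new Heegaard surface $\Sigma'$, and discarding the now-redundant basepoints $z_1$ and $w_2$ yields a Heegaard diagram $(\Sigma', \bm{\alpha}, \bm{\beta}, \mathbf{w}\setminus\{w_2\}, \mathbf{z}\setminus\{z_1\})$ for $(\kappa(Y,\{p,q\}), \kappa(L,\{p,q\}))$. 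Because the handle is attached in a region disjoint from $\bm{\alpha}$ and $\bm{\beta}$, the set $\mathbb{T}_\alpha \cap \mathbb{T}_\beta$ is literally the same on both diagrams; this is exactly the identification of generators behind Theorem \ref{thm:hflofknotification}, and the one along which I want to transport the action.

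Next I would compare the chain-level action formulas. The closure $\tilde\zeta$ agrees with $\zeta$ away from the handle region and, on the unique component of $\zeta$ running between $L_1$ and $L_2$, adds the core of the attached 1-handle. Since this core lies in a piece of $\Sigma'$ disjoint from every $\bm{\alpha}$-curve, for any domain $\phi$ meaningful on both diagrams the intersection numbers $\tilde\zeta \cdot \partial_\alpha\phi$ and $\zeta \cdot \partial_\alpha\phi$ agree, so $e_{\tilde\zeta}(\phi) = e_\zeta(\phi)$. Consequently, as soon as the holomorphic-disk counts are matched, each coefficient in the $\tilde\zeta$-action formula will match the corresponding coefficient in the $\zeta$-action formula.

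The matching of holomorphic disks is the main obstacle, and is handled by a neck-stretching argument. I would pick almost complex structures on $\Sigma$ and $\Sigma'$ that agree on every domain not meeting the handle region, so that any $\phi \in \pi_2(\mathbf{x}, \mathbf{y})$ with $n_{z_1}(\phi) = n_{w_2}(\phi) = 0$ has $\#\Mo(\phi)$ on $\Sigma$ equal to $\#\Mo(\phi')$ for the corresponding $\phi'$ on $\Sigma'$, and thus contributes identically on the two sides. The only extra terms on the $\Sigma'$ side come from domains that cross the 1-handle neck, and to kill these I would invoke the neck-stretching construction that Ozsv\'ath--Szab\'o use in the proof of \cite[Theorem 1.1]{ozsvathHolomorphicDisksLink2008}: for a sufficiently stretched complex structure on $\Sigma'$, any $\phi$ with $\#\Mo(\phi)\ne 0$ that can appear in the action (in particular in the differential) must satisfy $n_{z_1}(\phi) = n_{w_2}(\phi) = 0$. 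Once these exotic disks are excluded, the two action formulas agree term by term, so the identification of actions descends to homology, proving the theorem.
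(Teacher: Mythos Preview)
Your proposal is correct and follows essentially the same approach as the paper: build the knotification Heegaard diagram by attaching a 1-handle near $z_1$ and $w_2$ and dropping those basepoints, identify generators, match $e_{\tilde\zeta}$ with $e_\zeta$ on disks avoiding the handle, and invoke the neck-stretching argument from \cite[Proof of Theorem 1.1]{ozsvathHolomorphicDisksLink2008} to rule out disks crossing the handle region. Your observation that the added core of the 1-handle is disjoint from the $\bm\alpha$-curves is exactly the reason the intersection numbers agree, and the rest of your outline lines up with the paper's proof step for step.
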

Now we state the homological-action-enhanced version of the disjoint union formula for $\widehat{HFL}$, compare with Theorem \ref{thm:connsum}.
\begin{thm}\label{thm:split}
Let $L$ be a split link $L=L_1\sqcup L_2$. Then we have a K\"{u}nneth type formula for $\widehat{HFL}$ with homological action in the sense that
\begin{align*}
    \widehat{HFL}(L)\cong \widehat{HFL}(L_1)\otimes \widehat{HFL}(L_2)\otimes \mathbb{F}[X]/(X^2)
\end{align*}
as a $\mathbb{F}[H_1(S^3,L)]$-module, where the module structure on the right is the tensor product of $\mathbb{F}[H_1(S^3,L_1)]$-module $\widehat{HFL}(L_1)$, $\mathbb{F}[H_1(S^3,L_2)]$-module $\widehat{HFL}(L_2)$, and $\mathbb{F}[X]$-module $\mathbb{F}[X]/(X^2)$. Here the isomorphism
\begin{align}\label{eq:split}
    H_1(S^3,L)\cong H_1(S^3,L_1)\oplus H_1(S^3,L_2)\oplus \ZZ
\end{align}
is used implicitly and the tensor product of $\FF[G_i]$-module $M_i, i\in 1,2$ is defined as
\begin{align*}
    (X_1\oplus X_2)\cdot (m_1\otimes m_2)=(X_1\cdot m_1)\otimes m_2+m_1\otimes (X_2\cdot m_2)
\end{align*}
for $X_i\in G_i, m_i\in M_i, i=1,2$.
\begin{proof}
Let $(\Sigma_i,\bm{\alpha}_i,\bm{\beta}_i,\mathbf{w}_i, \mathbf{z}_i)$ be a Heegaard diagram for $L_i, i=1,2$. Then we can take the connected sum of the two Heegaard diagram where the two feet of connected sum neck are near a point $z\in\mathbf{z}_1$ and $w\in\mathbf{w}_2$, respectively, to form a Heegaard diagram for $L$. Note that the core of the connected sum neck (from $z$ to $w$) spans $H_1(S^3,L)/\left(H_1(S^3,L_1)\oplus H_1(S^3,L_2)\right)\cong \ZZ$.

From the decomposition \eqref{eq:split}, there are two mutually distinct cases to consider:
\begin{itemize}
    \item $\zeta$ is in one of $\widehat{HFL}(S^3,L_i)$, $i=1,2$:\\
    In these cases, we can find a 1-cycle contained in the connected summand $(\Sigma_i, \bm{\alpha}_i, \bm{\beta}_i, \mathbf{w}_i, \mathbf{z}_i)$. Hence it is clear that $\zeta$ acts trivially on the other two tensorands.
    \item $\zeta$ is a multiple of the core of the connected sum neck:\\
    In this case, the core of the connected sum neck has trivial image in each connected summand $(\Sigma_i, \bm{\alpha}_i, \bm{\beta}_i, \mathbf{w}_i, \mathbf{z}_i)$, hence acts trivially on $\widehat{HFL}(L_i)$. And it is easy to check that it acts as multiplication by $X$ on $\mathbb{F}[X]/(X^2)$,
\end{itemize}
\end{proof}
\end{thm}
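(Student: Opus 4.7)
The plan is to construct an explicit Heegaard diagram for $L$ by taking the connected sum of Heegaard diagrams for $L_1$ and $L_2$ along a thin tube whose feet lie inside a small neighborhood of some $z \in \mathbf{z}_1$ and some $w \in \mathbf{w}_2$, and then to analyze the induced action of each summand of $H_1(S^3, L) \cong H_1(S^3, L_1) \oplus H_1(S^3, L_2) \oplus \mathbb{Z}$ separately. The decomposition of $H_1(S^3, L)$ is naturally witnessed geometrically: the first two summands come from relative 1-cycles supported in the respective sides of the splitting sphere, and the extra $\mathbb{Z}$ is generated by the core of the connecting tube, which traverses the splitting sphere.

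The first step is to invoke the standard neck-stretching argument (exactly as used in the proof of \cite[Theorem 1.4]{ozsvathHolomorphicDisksLink2008}) to identify the generators of the connected-sum diagram with the product of the generators of the two factor diagrams (times the two-dimensional factor created by the tube bigon), and to ensure that all holomorphic disks $\phi$ with $\#\widehat{\mathcal{M}}(\phi) \neq 0$ contributing to the differential decompose as products of disks lying strictly in one of the $\Sigma_i$. This gives the vector-space isomorphism $\widehat{HFL}(L) \cong \widehat{HFL}(L_1) \otimes \widehat{HFL}(L_2) \otimes \mathbb{F}[X]/(X^2)$.

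Next I would verify compatibility with the homological action case by case. For $\zeta$ supported in $H_1(S^3, L_i)$, choose a representative 1-cycle lying entirely in $\Sigma_i$; then for any disk $\phi$ decomposing as $\phi_1 \times \phi_2$ the intersection number $e_\zeta(\phi) = \zeta \cdot \partial_\alpha \phi$ picks up only the $\phi_i$-contribution, so $\zeta$ acts as $\zeta \otimes \mathrm{id} \otimes \mathrm{id}$ (and symmetrically for $i=2$), which is precisely the tensor-product formula given in the statement. For $\zeta$ equal to the core arc of the tube, the neck-stretched disks contributing to the differential never cross the tube, so $e_{\zeta}(\phi) = 0$ on the disks coming from $\widehat{HFL}(L_i)$; it thus remains to show that the tube core acts as multiplication by $X$ on the extra $\mathbb{F}[X]/(X^2)$ factor.

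This last verification is where I expect the real work to lie. One must identify the two generators of the tube bigon factor explicitly in the connected-sum diagram and locate the Maslov index 1 bigon whose $\alpha$-boundary meets the tube core transversely in a single point — this bigon does not contribute to the differential of the connected sum (because on the link-Floer side we quotient by holomorphic disks, and the bigon in question has zero multiplicity only after appropriate basepoint bookkeeping) but does contribute to the $\zeta$-action and sends one generator to the other. Once this single bigon calculation is in hand, linearity, $X^2 = 0$, and the derivation rule $(X_1 \oplus X_2) \cdot (m_1 \otimes m_2) = (X_1 m_1) \otimes m_2 + m_1 \otimes (X_2 m_2)$ for tensor products of $\mathbb{F}[G_i]$-modules immediately package the three cases into the stated module isomorphism.
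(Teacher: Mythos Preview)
Your approach is exactly the paper's: form the connected-sum Heegaard diagram with the neck placed near $z\in\mathbf{z}_1$ and $w\in\mathbf{w}_2$, invoke the K\"unneth argument for the underlying vector-space isomorphism, and then check the homological action summand by summand. The paper is simply terser where you are explicit.

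One correction to your last paragraph: the neck contributes an extra $\alpha_0/\beta_0$ pair (the belt circle and a Hamiltonian-isotopic copy) meeting in two points $a,b$, and there are \emph{two} basepoint-free bigons between them, not one. Both bigons contribute to the chain-level differential, so $\partial a = b+b = 0$ over $\FF$; the differential vanishes by cancellation, not because of basepoints. The $\zeta$-action uses the \emph{same} index-one disks as $\partial$ (with the same $n_{\mathbf{w}}=n_{\mathbf{z}}=0$ constraint), so your parenthetical explanation would make the action vanish too. What actually happens is that the core arc $\zeta$ crosses $\alpha_0$ exactly once, hence meets the $\alpha$-boundary of one bigon but not the other; thus $e_\zeta$ equals $1$ on one bigon and $0$ on the other, giving $\zeta\cdot a=b$ and $\zeta\cdot b=0$. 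That is the ``easy check'' the paper alludes to.
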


\subsection{Twisted coefficients}
In this section, we review the construction of link Floer homology with twisted coefficients, a link Floer analog of the well-known construction in Heegaard Floer homology(cf. \cite[Chapter 8]{ozsvathHolomorphicDisksThreeManifold2004}). The content of this subsection should be familiar to experts in Heegaard Floer theory, but to the author's best knowledge generalization to link Floer homology (and to sutured Floer homology in particular) is new.

In this section, we consider a general sutured manifold and its Heegaard diagrams, and the result on link Floer homology follows as a special case of the corresponding result on sutured Floer homology.

For a sutured Heegaard diagram $(\Sigma, \bm{\alpha},\bm{\beta},\mathbf{w})$ representing the balanced sutured manifold $Y$ and $\mathbf{x}_0, \mathbf{x},\mathbf{y}\in \mathbb{T}_\alpha\cap \mathbb{T}_\beta$, the set of homotopy classes of Whitney disks $\pi_2(\mathbf{x},\mathbf{y})$ is isomorphic to $\pi_2(\mathbf{x}_0,\mathbf{x}_0)$. One such isomorphism is induced from a choice of a complete set of paths $\left\{\phi_i\right\}$ \cite{ozsvathHolomorphicDisksTopological2004}, a set of Whitney disks $\phi_i\in\pi_2(\mathbf{x}_0, \mathbf{x}_i)$ with the property that for any $i,j$,
\begin{align*}
    \phi_i * \pi_2(\mathbf{x}_i,\mathbf{x}_j)\cong \pi_2(\mathbf{x}_0,\mathbf{x}_0)*\phi_j
\end{align*}
holds. This induces a (surjective) additive assignment in the sense of \cite[Definition 2.12]{ozsvathHolomorphicDisksTopological2004}
\begin{align*}
    A:\pi_2(\mathbf{x},\mathbf{y})\to H_2(Y;\ZZ),
\end{align*}
ie., $A(\phi*\psi)=A(\phi)+A(\psi)$ holds for any composable pair of Whitney disks $(\phi, \psi)$.

More precisely, this is given by the composition of the isomorphism $\pi_2(\mathbf{x},\mathbf{y})\cong \pi_2(\mathbf{x}_0,\mathbf{x}_0)$ and the natural homomorphism $H:\pi_2(\mathbf{x}_0,\mathbf{x}_0)\to H_2(Y;\ZZ)$ from the periodic domains to $H_2(Y;\ZZ)$ as defined in e.g. \cite[Definition 3.9]{juhaszHolomorphicDiscsSutured2006}.

The additive assignment $A$ induced from a complete set of paths as above is universal in the sense that any $G$-valued additive assignment $A':\pi_2(\mathbf{x},\mathbf{y})\to G$ factors through $A$. That is, one can find a homomorphism $H_2(Y;\ZZ)\to G$ such that pre-composition with $A$ gives rise to $A'$. As the homomorphism $H_2(Y;\ZZ)\to G$ (or, equivalently, the $\mathbb{F}[H_2(Y;\ZZ)]$-module structure on $G$ defined by the homomorphism) contains all the information on the additive assignment $A'$, we often abuse the notations and call the homomorphism $H_2(Y;\ZZ)\to G$ or the $\mathbb{F}[H_2(Y;\ZZ)]$-module structure on $G$ as the additive assignment.

For any $G$-valued additive assignment $A'$, we define a chain complex $\underline{SFC}(Y;A')$ freely generated over $\mathbb{F}[H_2(Y;\ZZ)]$ by $\mathbb{T}_\alpha\cap \mathbb{T}_\beta$ and the differential given by
\begin{align*}
    \partial \mathbf{x}=\sum_{\mathbf{y}\in\mathbb{T}_\alpha\cap \mathbb{T}_\beta}\left(\sum_{\phi\in \pi_2(\mathbf{x},\mathbf{y})}\#\Mo(\phi) e^{A'(\phi)}\mathbf{y}\right),
\end{align*}
whose homology is the sutured Floer homology $\underline{SFH}(Y;A')$ of $Y$ with twisted coefficients $A'$. The sum in the differential is finite under the exactly same admissibility condition as required for the ordinary sutured Floer homology $SFH(Y)$ of $Y$, and the isomorphism class of $\underline{SFH}(Y;A')$ is an invariant of $Y$ and $A'$. (The proof for the invariance of $\underline{\widehat{HF}}$ in \cite{ozsvathHolomorphicDisksThreeManifold2004} applies verbatim to our case.)

When $A'$ is the universal additive assignment $A:\pi_2(\mathbf{x}_0, \mathbf{x}_0)\to H_2(Y;\ZZ)$, we omit the twisted coefficient $A'$ from the notation and call it the \textit{totally twisted} sutured Floer homology. The sutured Floer homology $\underline{SFH}(Y;A')$ with an arbitrary additive assignment $A'$ is related to the totally twisted homology $\underline{SFH}(Y)$ by the following formula:
\begin{align*}
    \underline{SFH}(Y;A')\cong H_*\left(\underline{SFC}(Y)\otimes_{\mathbb{F}[H_2(Y)]}A'\right).
\end{align*}

A special case of our interest is the following:
\begin{ex}\label{ex:twisted coefficients}
Let $\zeta$ be a relative 1-cycle on $\Sigma$. Then, as in the previous section, $\zeta$ defines a $\ZZ$-valued additive assignment $e_\zeta:\pi_2(\mathbf{x},\mathbf{y})\to \ZZ$ defined by
\begin{align*}
    e_\zeta(\phi)=\zeta\cdot \partial_\alpha \phi,   
\end{align*}
where $\partial_\alpha\phi=(\partial \phi)\cap \mathbb{T}_\alpha$ is the multi-arc in $\Sigma$ that lies on $\mathbf{\alpha}$. In terms of the equivalence between $\ZZ$-valued additive assignments and homomorphisms from $H_2(Y)$ to $\ZZ$, the corresponding homomorphism is given as 
\begin{align*}
    e_\zeta(a) = \zeta\cdot \partial_\alpha a,
\end{align*}
where $\partial_\alpha a$ is a 1-cycle on $\Sigma$ defined as
\begin{align*}
    \partial_\alpha a = \sum_{i}(a\cdot a_i) \alpha_i,
\end{align*}
where $a_i$ is the co-core of the 2-handle attached along $\alpha_i$ in the construction of $Y$ from the Heegaard diagram $(\Sigma, \bm{\alpha},\bm{\beta})$.

This defines a ring homomorphism $\mathbb{F}[H_2(Y)]\to \Lambda$, where $\Lambda$ is the universal Novikov ring
\begin{align*}
    \Lambda = \left\{\sum_{i=0}^\infty a_{r_i} t^{r_i}\mid r_i\in \mathbb{R}, a_{r_i}\in \mathbb{F}, r_i \to \infty\right\}.
\end{align*}
We interpret this homomorphism as a $\mathbb{F}[H_2(Y)]$-module structure over $\Lambda$ and denote it as $\Lambda_\zeta$. Tensoring with the totally twisted sutured Floer complex $\underline{SFC}(Y)$, we obtain a chain complex $\underline{SFC}(Y;\Lambda_\zeta)$ freely generated over $\Lambda_\zeta$.
\end{ex}
\begin{cor}
The isomorphism class of the homology group $\underline{SFH}(L;\Lambda_\zeta)$ depends only on the homology class $[\zeta]\in H_1(Y,\partial Y;\ZZ)/Tors\cong \mathrm{Hom}(H_2(Y),\ZZ)$ of $\zeta$.
\begin{proof}
This is an easy adaptation of \cite[Proof of Lemma 2.4]{niHomologicalActionsSutured2014}. Using the same notation as in \cite{niHomologicalActionsSutured2014}, the chain homotopy equivalence $f:\underline{SFC}(Y;\Lambda_{\omega_1})\to\underline{SFC}(Y;\Lambda_{\omega_2})$ is induced by
\begin{align*}
    f(\mathbf{x})=t^{\frac{n_{\mathbf{x}}(B')}{m}}\mathbf{x}.
\end{align*}
\end{proof}
\end{cor}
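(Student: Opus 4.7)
The plan is to adapt \cite[Proof of Lemma 2.4]{niHomologicalActionsSutured2014}, which treated the closed case for $\widehat{HF}$, to the present sutured setting. Invoking Poincaré-Lefschetz duality and the universal coefficient theorem to identify $H_1(Y,\partial Y;\ZZ)/\mathrm{Tors}$ with $\mathrm{Hom}(H_2(Y),\ZZ)$, two relative 1-cycles $\zeta_1,\zeta_2$ represent the same class in $H_1(Y,\partial Y;\ZZ)/\mathrm{Tors}$ precisely when there exist a positive integer $m$ and a 2-chain $B$ in $Y$ with $\partial B=m(\zeta_1-\zeta_2)$, up to absorbing torsion contributions. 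I would fix a sutured Heegaard diagram for $Y$ on which $\zeta_1$ and $\zeta_2$ both sit as relative 1-cycles on $\Sigma$, and reduce to this configuration.

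Next I would realize $B$ as a 2-chain $B'$ on the Heegaard surface $\Sigma$ by capping off the portions lying in the $\alpha$- and $\beta$-handlebodies using compressing disks, so that $\partial B'$ on $\Sigma$ becomes a combination of complete $\alpha$- and $\beta$-curves together with a representative of $m(\zeta_1-\zeta_2)$. An intersection-theoretic bookkeeping on $\Sigma$ then yields the key identity
\[
m\bigl(e_{\zeta_1}(\phi)-e_{\zeta_2}(\phi)\bigr)=n_{\mathbf{y}}(B')-n_{\mathbf{x}}(B')
\]
for every $\phi\in\pi_2(\mathbf{x},\mathbf{y})$ appearing in the differential, since the full $\alpha$- and $\beta$-curves in $\partial B'$ account precisely for the multiplicity jumps of $B'$ across $\partial\mathcal{D}(\phi)$ while contributing nothing to the pairing $(\zeta_1-\zeta_2)\cdot\partial_\alpha\phi$.

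With this identity in hand, I define $f\colon\underline{SFC}(Y;\Lambda_{\zeta_1})\to\underline{SFC}(Y;\Lambda_{\zeta_2})$ by
\[
f(\mathbf{x})=t^{n_{\mathbf{x}}(B')/m}\mathbf{x},
\]
which is well-defined because $\Lambda$ admits real exponents. A direct calculation using the identity above shows that $f$ intertwines the twisted differentials term by term, and the formal inverse $\mathbf{x}\mapsto t^{-n_{\mathbf{x}}(B')/m}\mathbf{x}$ is a two-sided inverse on the nose, so $f$ is an isomorphism of chain complexes and induces the desired isomorphism on homology. The main obstacle is carrying out the translation from the 3-dimensional 2-chain $B$ to a 2-chain $B'$ on $\Sigma$ with the prescribed boundary; this is the analog of representing elements of $H_2(Y)$ by periodic domains in the closed case, and in the sutured setting it requires a capping-off argument compatible with the balanced sutured diagram, but beyond this no essentially new ideas are needed beyond those already present in \cite{niHomologicalActionsSutured2014}.
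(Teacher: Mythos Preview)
Your proposal is correct and follows exactly the approach the paper sketches: both adapt \cite[Proof of Lemma 2.4]{niHomologicalActionsSutured2014} and define the comparison map by the same rescaling formula $f(\mathbf{x})=t^{n_{\mathbf{x}}(B')/m}\mathbf{x}$. Your write-up simply fills in the details the paper leaves implicit (the Poincar\'e--Lefschetz identification, the construction of $B'$ on $\Sigma$, and the multiplicity identity), and you are in fact slightly more precise in noting that $f$ is a chain \emph{isomorphism} rather than merely a chain homotopy equivalence.
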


\subsection{Module structure over $A=\FF[X]/(X^2)$ and splitness detection theorem}
As homological actions square to zero, a relative 1-cycle $\zeta\in H_1(S^3, L)$ defines an $A$-module structure on $\widehat{HFL}(L)$, where $\zeta$ acts as multiplication by $X$.

In \cite{lipshitzKhovanovHomologyAlso2019}, a similar structure on the Khovanov homology of $L$ is considered. Their main theorem \cite[Theorem 1]{lipshitzKhovanovHomologyAlso2019} provides a connection between this truncated module structure and the splitness of the link. In this section, we prove the link Floer homology version of this splitness detection theorem, Theorem \ref{thm:splitdetection}.
\begin{proof}[Proof of $(1)\Rightarrow (2)$ in Theorem \ref{thm:splitdetection}]
Suppose that $L=L_1\sqcup L_2$ is split and $S$ is the embedded sphere in the link complement separating $L_1$ from $L_2$. This in particular means that a path $\zeta$ connecting $L_1$ to $L_2$ spans the $\mathbb{Z}$ summand in the decomposition \eqref{eq:split} in Theorem \ref{thm:split}. Hence the $A$-module structure on $\widehat{HFL}(L)$ induced from $\zeta$ is the restriction of the $\FF[H_1(S^3,L)]$-module structure on the subring $\FF[X]$ spanned by $\zeta\in H_1(S^3, L)$ after truncation by $(X^2)$. But Theorem \ref{thm:split} states that multiplication by $X$ acts trivially on the $\widehat{HFL}(L_i)$ tensorands, $i=1,2$. As $A$ is clearly free over itself, $\widehat{HFL}(L)$ is free over $A$ as well.
\end{proof}
That is, the ``easy" direction is essentially the enhanced version of the disjoint union formula for $\widehat{HFL}$. But to prove the ``hard" direction of Theorem \ref{thm:splitdetection}, we need to introduce some notations and (deep) symplectic topological results in Heegaard Floer theory, which follows from now on.

In Example \ref{ex:twisted coefficients}, we consider the twisted complex $\underline{SFC}(Y;\Lambda_\zeta)$. The very same construction but using a real 2-form $\omega\in \mathrm{Hom}(H_2(Y;\RR),\RR)\cong H^2(Y;\RR)$ in place of $e_\zeta$ gives rise to a twisted complex $\underline{SFC}(Y;\Lambda_\omega)$ (compare with the twisted knot Floer complexes in \cite{niNonseparatingSpheresTwisted2013}).

The following lemma is the sutured analogue of \cite[Corollary 2.3]{alishahiBorderedFloerHomology2019}.
\begin{lm}\label{lm:uppersemicontinuity}
Let $\omega\in \Omega^2(Y)$ be a generic closed 2-form on $Y$. Then for any closed 2-form $\omega'\in \Omega^2(Y)$ we have
\begin{align*}
\mathrm{dim}_{\Lambda_{\omega'}}\left(\underline{SFH}(Y;\Lambda_{\omega'})\right)\geq \mathrm{dim}_{\Lambda_\omega}\left(\underline{SFH}(Y;\Lambda_\omega)\right).
\end{align*}
(A 2-form $\omega\in \Omega^2(Y)$ is generic if the evaluation map $e_\omega:H_2(Y)\to \RR, a\mapsto \int_a\omega$ is injective.)
\begin{proof}
We have chosen our notations to match with the notations in \cite[Section 2.1]{alishahiBorderedFloerHomology2019}. The entire Section 2.1 but replacing $\underline{\widehat{CF}}(Y)$ and $\underline{\widehat{HF}}(Y)$ with $\underline{SFC}(Y)$ and $\underline{SFH}(Y)$ respectively shows the lemma.
\end{proof}
\end{lm}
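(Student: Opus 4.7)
The argument follows the template of \cite[Section 2.1]{alishahiBorderedFloerHomology2019} with the substitutions indicated by the author, so let me sketch the plan and flag where checks are needed.

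First, I would fix an admissible sutured Heegaard diagram $(\Sigma,\bm{\alpha},\bm{\beta},\mathbf{w})$ for $Y$ and view the universally twisted complex $\underline{SFC}(Y)$ as a free $\FF[H_2(Y;\ZZ)]$-module on the finite set $\mathbb{T}_\alpha\cap\mathbb{T}_\beta$. A closed 2-form $\omega$ induces the ring homomorphism $\FF[H_2(Y;\ZZ)]\to\Lambda$ sending $[A]\mapsto t^{\int_A\omega}$, and $\underline{SFC}(Y;\Lambda_\omega)$ is the base change. After choosing an ordering of the generators, the differential becomes a square matrix $M(\omega)$ whose entries are $\FF$-linear combinations of monomials $t^{\int_{A(\phi)}\omega}$ indexed by Maslov index $1$ holomorphic disks; sutured admissibility (checked verbatim as in \cite{juhaszHolomorphicDiscsSutured2006}) guarantees that each entry is a well-defined element of $\Lambda$.

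Second, I would establish that $\mathrm{rank}_\Lambda M(\omega)$ is maximized at generic $\omega$. For $\omega$ with $e_\omega$ injective, the exponents $\int_{A(\phi)}\omega$ are pairwise distinct across distinct homology classes of disks, so no two monomials in a single entry of $M(\omega)$ share the same valuation and no cancellations of leading-order terms can occur. Filtering $M(\omega)$ by valuation and passing to the associated graded yields an $\FF$-valued matrix whose $\FF$-rank bounds $\mathrm{rank}_\Lambda M(\omega)$ from below; for generic $\omega$ this lower bound is sharp, since each entry has a single monomial of lowest valuation. For any other $\omega'$, collisions among the valuations can only cause cancellations of terms in the associated graded, so the $\FF$-rank of that graded matrix can only drop, yielding $\mathrm{rank}_\Lambda M(\omega')\leq \mathrm{rank}_\Lambda M(\omega)$.

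Third, since $\Lambda$ is a field and the chain complex is finite-dimensional over $\Lambda$, the identity
\begin{equation*}
\dim_{\Lambda_\omega}\underline{SFH}(Y;\Lambda_\omega)=|\mathbb{T}_\alpha\cap\mathbb{T}_\beta|-2\cdot\mathrm{rank}_\Lambda M(\omega)
\end{equation*}
holds, so maximizing the rank minimizes the dimension of the homology and the desired inequality follows immediately. The main obstacle is the careful filtration-by-valuation argument needed to transfer rank information between $\FF$-matrices and $\Lambda$-matrices; everything else is formal and identical to the Heegaard Floer setting of \cite{alishahiBorderedFloerHomology2019}, the only sutured-specific input being the availability of an admissible sutured Heegaard diagram.
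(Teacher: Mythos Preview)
Your approach is the same as the paper's: both defer to \cite[Section~2.1]{alishahiBorderedFloerHomology2019}, and your steps 1 and 3 are exactly the right framing (base-change the universally twisted complex, then use $\dim H = |\mathbb{T}_\alpha\cap\mathbb{T}_\beta| - 2\,\mathrm{rank}\,M$). The only sutured-specific input is admissibility, which you correctly flag.

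However, your sketch of step~2 has a logical gap. You assert that the $\FF$-rank of an ``associated graded'' matrix gives a \emph{lower} bound for $\mathrm{rank}_\Lambda M(\omega)$, that this bound is sharp at generic $\omega$, and that the $\FF$-rank can only drop for other $\omega'$. But chaining these gives
\[
\mathrm{rank}_\Lambda M(\omega') \;\geq\; \mathrm{rank}_\FF(\text{gr}_{\omega'}) \;\leq\; \mathrm{rank}_\FF(\text{gr}_\omega) \;=\; \mathrm{rank}_\Lambda M(\omega),
\]
which does \emph{not} yield $\mathrm{rank}_\Lambda M(\omega')\leq \mathrm{rank}_\Lambda M(\omega)$: the inequalities point the wrong way. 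A lower bound on $\mathrm{rank}_\Lambda M(\omega')$ cannot produce the upper bound you need.

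The clean argument (and the one in \cite{alishahiBorderedFloerHomology2019}) avoids valuations entirely. Work over $R=\FF[H_2(Y;\ZZ)]$: the differential of the totally twisted complex is a fixed matrix $M$ over $R$, and for any field $S$ with a map $R\to S$ the rank of $M\otimes_R S$ is the largest $k$ for which some $k\times k$ minor of $M$ (an element of $R$) has nonzero image in $S$. When $e_\omega$ is injective, distinct elements of $H_2(Y)$ map to monomials $t^r$ with distinct exponents, so the ring map $R\to\Lambda_\omega$ is injective; hence every nonzero minor survives and $\mathrm{rank}_{\Lambda_\omega}M(\omega)$ equals the maximal possible value $\mathrm{rank}_{\mathrm{Frac}(R)}M$. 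For arbitrary $\omega'$ some minors may die, so $\mathrm{rank}_{\Lambda_{\omega'}}M(\omega')$ can only be smaller. This immediately gives the inequality via your step~3.
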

Finally, we need a rather standard lemma:
\begin{lm}
For a nonsplit link $L\subseteq S^3$, the knot complement of its knotification $\kappa(L)\subseteq \kappa(S^3)=\connsum^{l-1}(S^1\times S^2)$ is irreducible.
\begin{proof}
First note that an annulus $A$ obtained by puncturing the belt sphere of a 1-handle in $\kappa(S^3)$ along the intersection with $\kappa(L)$ is incompressible. As the core of $A$ is a meridian of $\kappa(L)$, its homotopy class in $\pi_1(\kappa(S^3)\setminus \kappa(L))$ is nontorsion. Hence $A$ is $\pi_1$-injective, ie, incompressible.

As surgering out along an incompressible surface preserves the irreducibility of the manifold, it suffices to show that the knot complement $\kappa(S^3)\setminus \kappa(L)$ cut along the belt sphere of the 1-handles is irreducible. But this is just the link complement $S^3\setminus L$.

It is left to check that the complement of a nonsplit link $L$ is irreducible. As $L$ is nonsplit, there are no separating essential spheres in $S^3\setminus L$. And there are no nonseparating essential spheres in $S^3\setminus L$ by Alexander's lemma. Hence $S^3\setminus L$ is irreducible, completing the proof.
\end{proof}
\end{lm}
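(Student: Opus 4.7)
The plan is to reduce irreducibility of $\kappa(S^3)\setminus\kappa(L)$ to that of the original link exterior $S^3\setminus L$ by cutting along natural incompressible surfaces, one for each knotification step. For each 1-handle added during the knotification, its belt 2-sphere meets $\kappa(L)$ transversely in exactly two points (the two strands of the band pass through the core of the handle), so deleting these two points yields an annulus $A$ in $\kappa(S^3)\setminus\kappa(L)$. Cutting $\kappa(S^3)\setminus\kappa(L)$ along all $l-1$ such belt annuli simultaneously reverses the knotification construction and produces exactly the link exterior $S^3\setminus L$. Hence it is enough to show: (a) each belt annulus is incompressible in $\kappa(S^3)\setminus\kappa(L)$, (b) $S^3\setminus L$ is itself irreducible, and (c) cutting along incompressible surfaces preserves irreducibility in the sense that if the cut manifold is irreducible then so is the original.

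For (b), any essential 2-sphere in $S^3\setminus L$ must separate, since every embedded 2-sphere in $S^3$ separates by Alexander's theorem. A separating essential sphere decomposes $S^3$ into two 3-balls each meeting $L$ nontrivially (otherwise the sphere bounds a ball in the complement and is inessential), which exhibits $L$ as split, contradicting the hypothesis. For (a), observe that the core circle of $A$ is a meridian of $\kappa(L)$. A meridian of a link that is not unknotted does not bound an embedded disk in the complement, and hence by the loop theorem/Dehn's lemma represents a nontorsion element of $\pi_1$ of the complement; it follows that $A$ is $\pi_1$-injective, i.e., incompressible. A clean alternative for this nontorsion point is to note that $\kappa(L)$ is null-homologous in $\kappa(S^3)$, so the meridian class generates a $\ZZ$-summand of $H_1(\kappa(S^3)\setminus\kappa(L))$.

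For (c), the argument is a standard innermost-disk surgery: given an essential sphere $\Sigma\subseteq\kappa(S^3)\setminus\kappa(L)$ in general position with the annuli, pick an innermost circle of $\Sigma\cap A$ on $\Sigma$, bounding a disk $D\subseteq\Sigma$ with interior disjoint from the annuli; by incompressibility of the annulus containing this circle, the circle also bounds a disk $D'$ on the annulus, and surgering $\Sigma$ along $D'$ (or equivalently isotoping $\Sigma$ across the ball bounded by $D\cup D'$ via irreducibility of the already-cut pieces) strictly decreases $|\Sigma\cap A|$ while preserving essentialness. After finitely many reductions one obtains an essential sphere disjoint from every belt annulus, hence lying in $S^3\setminus L$, contradicting (b). The main obstacle I expect is keeping track of the innermost reduction cleanly when several annuli are present simultaneously and making sure the surgeries never introduce an inessential sphere that obstructs further reduction; this is standard in Haken theory but does require care about choosing the innermost circle across all annuli at each stage rather than one annulus at a time.
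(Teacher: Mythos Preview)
Your proposal is correct and follows the same route as the paper: incompressibility of the belt annuli via the meridian being nontorsion, irreducibility of $S^3\setminus L$ from the nonsplit hypothesis plus Alexander's theorem, and the standard fact that cutting along incompressible surfaces reflects irreducibility (the paper simply cites this last step as known, while you sketch the innermost-circle argument). One cosmetic point: in your sketch of (c), the parenthetical about isotoping across a ball bounded by $D\cup D'$ presupposes the irreducibility you are trying to prove, so stick with the surgery version---replace $\Sigma$ by the two capped-off spheres and note at least one remains essential with strictly fewer intersections.
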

\begin{proof}[Proof of $(2)\Rightarrow (1)$ in Theorem \ref{thm:splitdetection}]
We will show the contrapositive $\neg (1)\Rightarrow \neg (2)$ and closely follow the arguments in \cite[Section 5.2]{lipshitzKhovanovHomologyAlso2019}. Assume that $L$ is nonsplit.

First of all, using Theorem \ref{thm:knotification}, it suffices to show that $\widehat{HFL}(\kappa(L))$ is not free. As the knot complement $\kappa(S^3)\setminus \kappa(L)$ is irreducible by the previous lemma, a deep result in \cite[Theorem 3.8]{niNonseparatingSpheresTwisted2013} guarantees the existence of an open subset of 2-forms $\omega$ such that $\underline{\widehat{HFL}}(\kappa(L);\Lambda_\omega)$ is nontrivial. As an open subset is dense, Lemma \ref{lm:uppersemicontinuity} shows that $\underline{\widehat{HFL}}(\kappa(L);\Lambda_\omega)\neq 0$ for any 2-form $\omega$.

Let $\FF[t^{-1},t]_\omega$ be $\FF[t^{-1},t]$ viewed as an $\FF[H_2(\kappa(S^3)\setminus \kappa(L))]$-module via $\omega$, and similarly for $\FF(t)_\omega$. By the universal coefficient theorem, we have that
\begin{align*}
    \underline{\widehat{HFL}}(\kappa(L);\FF(t)_\omega)&\cong \underline{\widehat{HFL}}(\kappa(L);\FF[t^{-1},t]_\omega)\otimes_{\FF[t^{-1},t]}\FF(t)\\
    \underline{\widehat{HFL}}(\kappa(L);\Lambda_\omega)&\cong \underline{\widehat{HFL}}(\kappa(L);\FF(t)_\omega)\otimes_{\FF(t)}\Lambda
\end{align*}
Hence $\underline{\widehat{HFL}}(\kappa(L);\Lambda_\omega)\neq 0$ implies that $\underline{\widehat{HFL}}(\kappa(L);\FF[t^{-1},t]_\omega)$ has positive rank over the ring $\FF[t^{-1},t]$.

But the relation between $\widehat{HFL}(\kappa(L))$ and $\underline{\widehat{HFL}}(\kappa(L);\FF[t^{-1},t]_\omega)$ \cite[Theorem 5]{lipshitzKhovanovHomologyAlso2019} then implies that the $A$-module structure of $\widehat{HFL}(\kappa(L))$ induced by $\omega$ decomposes as
\begin{align*}
    \widehat{HFL}(\kappa(L))\cong \FF^m\oplus \FF\left<z_1,\cdots, z_k, w_1,\cdots, w_k\right>,
\end{align*}
where $X$ acts trivially on $\FF^m$ and on $\FF\left<w_1,\cdots, w_k\right>$ and $m>0$.\\
(\cite[Theorem 5]{lipshitzKhovanovHomologyAlso2019} is stated in terms of $\widehat{HF}$ and $\underline{\widehat{HF}}$ but its argument applies verbatim to our case.)

In particular, the kernel of multiplication by $X$ has dimension $m+k$, while the dimension of $\widehat{HFL}(\kappa(L))$ is $m+2k$. This contradicts the freeness of $\widehat{HFL}(\kappa(L))$ as the dimension of a free $A$-module is twice that of its kernel.
\end{proof}

\section{Shape of the link Floer polytopes}
In this section, we consider the constraints on the link Floer polytope from the convex geometric point of view. See e.g. \cite{grunbaum2013convex} for the standard definitions and notations in convex geometry used throughout this section.
\begin{defi}
The \textit{link Floer polytope} $P(L)$ of a link $L$ is a convex hull of the set of Alexander multigradings of $\widehat{HFL}(L)$, as a subset of $\mathbb{H}(L)$. More precisely, the link Floer polytope of $L$ is
\begin{align*}
    P(L):=\mathrm{conv}\left(\left\{h\in \mathbb{H}(L)\middle| \widehat{HFL}(L,h)\neq 0\right\}\right).
\end{align*}
\end{defi}
The link Floer homology of $L$ and the topology of the complement of $L$ are closely related by the following theorem. 
\begin{thm}[{\cite[Theorem 1.1]{ozsvathLinkFloerHomology2008}}]\label{thm:minkowski}
For a link without trivial components, its link Floer polytope (scaled up by a factor of 2) is equal to the Minkowski sum of the dual Thurston norm polytope of the link complement and the hypercube $[-1,1]^n$.
\end{thm}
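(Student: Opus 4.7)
The plan is to deduce Theorem \ref{thm:minkowski} by combining the sutured-Floer reinterpretation of link Floer homology with Juh\'asz's theorem that sutured Floer homology detects the Thurston norm. First I would reinterpret $\widehat{HFL}(L,h)$ as $SFH(S^3(L),\mathfrak{s}(h))$, where $S^3(L)$ denotes the link exterior equipped with two oppositely oriented meridional sutures on each boundary torus, and $\mathfrak{s}(h)$ is the relative $\mathrm{Spin}^c$ structure corresponding to $h\in\mathbb{H}(L)$ under the affine identification $\mathrm{Spin}^c(S^3(L))\to\mathbb{H}(L)$ recalled in Section 2. This reduces the claim to a statement about the convex hull of those $\mathfrak{s}$ for which $SFH(S^3(L),\mathfrak{s})\neq 0$.

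Next I would apply Juh\'asz's sutured Thurston-norm detection theorem: for a taut, irreducible, balanced sutured manifold $(Y,\gamma)$, the function
\begin{equation*}
[S]\longmapsto \max\bigl\{\langle c_1(\mathfrak{s}),[S]\rangle : SFH(Y,\mathfrak{s})\neq 0\bigr\}
\end{equation*}
coincides with twice the sutured Thurston norm $x_s$ on $H_2(Y,\partial Y;\RR)$. The hypothesis that $L$ has no trivial components enters exactly here, to guarantee that $(S^3(L),\gamma)$ is taut and irreducible (a trivial component would produce an essential sphere and obstruct tautness). Dualizing this equality, the convex hull of the first Chern classes of $\mathfrak{s}$'s with $SFH\neq 0$ is (twice) the dual sutured Thurston-norm polytope $P_{x_s}$ of $(S^3(L),\gamma)$.

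The third step is to identify $2P_{x_s}$ with the Minkowski sum of the dual Thurston norm polytope of $S^3\setminus L$ and $[-1,1]^l$. Given a properly embedded surface $F\subset S^3(L)$ representing a relative class $\alpha$, capping off each meridional boundary circle by a meridional disk inside the solid-torus neighborhood of the corresponding component produces a surface $\widehat{F}\subset S^3\setminus L$ with the same Euler characteristic but shifted homology class. The identity $x_s(F) = x(\widehat{F}) + |\partial F \cap \gamma|$, where the correction term counts transverse intersections with the sutures, then translates after dualization into the Minkowski summand $[-1,1]^l$, with one interval factor per link component (each suture pair contributing a $\pm 1$ ambiguity in the dual direction $[\mu_i]$).

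The main obstacle will be the careful bookkeeping of affine shifts and the factor of $2$. The map $\mathrm{Spin}^c(S^3(L))\to \mathbb{H}(L)$ carries a global offset involving $\lk(L\setminus L_i, L_i)$, and this offset must be aligned exactly with the correction converting the sutured Thurston norm back to the ordinary Thurston norm of $S^3\setminus L$, so that the polytope identity holds on the nose rather than merely up to translation. A secondary but related point is that the link Floer polytope is defined as the convex hull of the actual support, so one must verify not only the adjunction-type containment $P(L)\subseteq \text{RHS}$ but also that each vertex of the Minkowski sum is realized by a nontrivial class in $\widehat{HFL}(L)$; this in turn reduces to the nonvanishing half of Juh\'asz's theorem at the extremal $\mathrm{Spin}^c$ structures, which is available as soon as tautness has been established.
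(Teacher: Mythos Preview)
The paper does not prove Theorem~\ref{thm:minkowski}; it is quoted verbatim from Ozsv\'ath--Szab\'o's paper on link Floer homology and the Thurston norm and used as a black box. So there is no ``paper's own proof'' to compare your proposal against.

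That said, your outline is a reasonable alternative to the original Ozsv\'ath--Szab\'o argument. Their proof does not pass through Juh\'asz's sutured polytope theorem: one containment is an adjunction inequality proved directly on a Heegaard diagram adapted to a norm-minimizing surface, and the reverse containment uses Gabai's sutured hierarchy to exhibit nonvanishing classes at the extremal gradings. Your route through $SFH$ and the sutured Thurston norm is a later, more packaged way to reach the same conclusion, and it is essentially how Juh\'asz reproves and generalizes the result.

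One caution about your Step~3: the displayed identity $x_s(F)=x(\widehat{F})+|\partial F\cap\gamma|$ is not a standard formula and is not correct as written (for instance it is not homogeneous of the right degree, and capping by meridian disks changes the Euler characteristic). What you actually need is the comparison between the sutured norm on $(S^3(L),\gamma)$ and the ordinary Thurston norm on the link exterior, which on the dual side produces exactly the $[-1,1]^l$ summand, one interval per pair of meridional sutures. Getting that comparison right, together with the affine shift in the $\mathrm{Spin}^c$ identification, is where the honest work in your approach lies; the rest is citation.
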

For the sake of brevity, we often identify a property of a link complement and the corresponding property of a link and especially call the dual Thurston norm of the link complement of a link as the dual Thurston norm of the link itself.

Any convex polytope can be represented in two ways:\\
The H-representation, a system of linear inequalities of the form
\begin{align*}
    \left\{\mathbf{x}\in \mathbb{R}^l\mid A\mathbf{x}\leq \mathbf{b}, A\in \RR^d\times \RR^l, \mathbf{b}\in \RR^d\right\},
\end{align*}
or the V-representation,
\begin{align*}
    \mathrm{conv}(V):=\left\{\sum_i \lambda_i v_i\mid 0\leq \lambda_i, \sum_i \lambda_i=1, v_i\in V\right\},
\end{align*}
where $V$ is a finite collection of points in $\RR^l$. The V-representation is just a convex hull description of the polytope. The H-representation is describing the polytope as a (finite) intersection of closed half-spaces $A_i\mathbf{x}\leq b_i$, where $A_i\in \RR^d, b_i\in \RR, 1\leq i \leq d$. A closed half-space containing the polytope and intersecting nontrivially with the polytope on the boundary hyperplane is called a \textit{supporting half-space} and the boundary hyperplane is called \textit{supporting hyperplane}. The two notions are nearly equivalent:\\
Choosing an orientation of a supporting hyperplane determines which side of the hyperplane to be taken as the corresponding supporting half-space.

It is often convenient to visualize the link Floer homology group $\widehat{HFL}(L,h)$ as the collection of dots on $h\in P(L)$ of as many as the rank $\rank (\widehat{HFL}(L,h))$ of the link Floer homology. Then the spectral sequence in Theorem \ref{thm:CFSS} can be visualized as follows:\\
First we reduces the dimension of the link Floer polytope by one, by passing through the canonical projection $\mathbb{H}(L)\to \mathbb{H}(L\setminus L_i)$ of the form 
\begin{align*}
    (x_1,\cdots,x_i,\cdots, x_l)\mapsto (x_1,\cdots, \widehat{x_i},\cdots, x_l),    
\end{align*}
 following the grading shift 
\begin{align*}
    (x_1,\cdots, x_l)\mapsto (x_1-\frac{1}{2}\lk(L_i,L_1),\cdots, x_l-\frac{1}{2}\lk(L_i,L_l)).
\end{align*}
Then the dots over $h\in \mathbb{H}(L\setminus L_i)$ are cancelled out according to the differential $D$ in the spectral sequence in Theorem \ref{thm:CFSS} to yield the link Floer homology of the sublink. See Figure \ref{fig:CFSS} for example.
\begin{center}
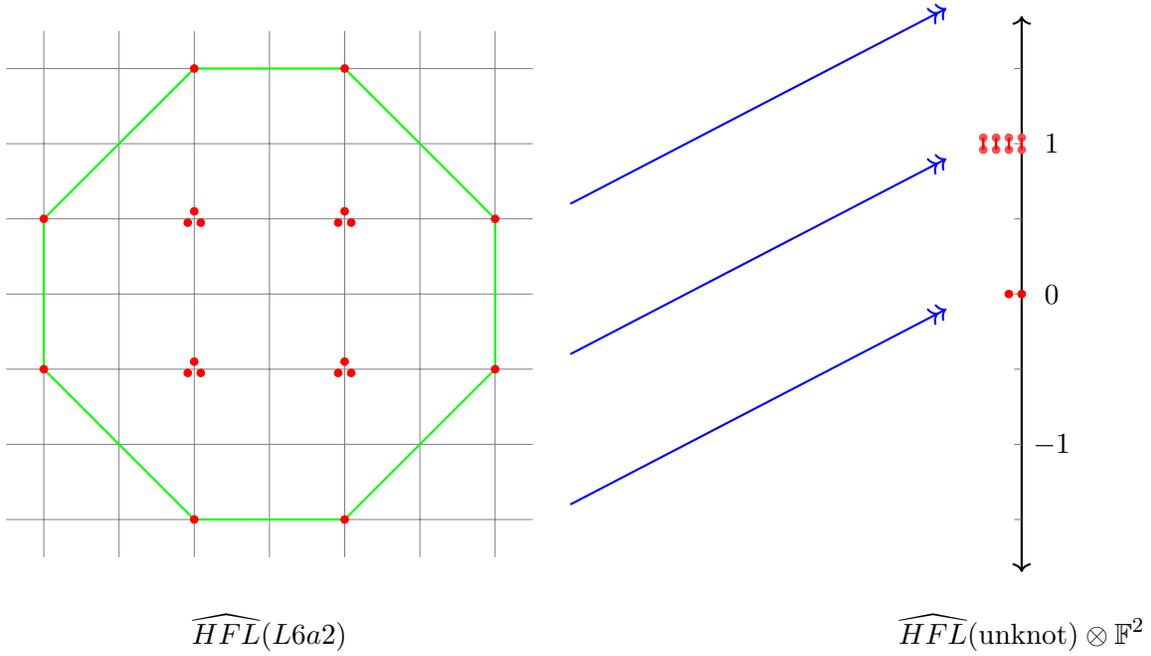
\begin{figure}[ht]
\vspace{1.5em}
\begin{tikzpicture}
\draw[step=1cm, gray, very thin] (-13.5, -3.5) grid (-6.5, 3.5);
\draw[green, thick] (-7, 1) -- (-9, 3) -- (-11, 3) -- (-13, 1) -- (-13, -1) -- (-11, -3) -- (-9, -3) -- (-7, -1) -- cycle;
\foreach \i in {-1,...,1}
\foreach \x in {-1, 1}
\foreach \y in {-1, 1}
{
\draw[red, fill] ({-10+\x+0.1*sin(120*\i)}, {\y+0.1*cos(120*\i)}) circle [radius=0.05];
}
\foreach \x in {-1, 1}
\foreach \y in {-3, 3}
{
\draw[red, fill] ({-10+\x}, \y) circle [radius=0.05];
\draw[red, fill] ({-10+\y}, \x) circle [radius=0.05];
}

\draw[thick, <->] (0, -3.7) -- (0, 3.7);
\foreach \x in {-1,...,1}
{
\node (\x) at (0.4, {2*\x}) {$\x$};
}
\foreach \x in {-3,...,3}
{
\draw[gray] (0, \x) -- (-0.1, \x);
}
\foreach \x in {0,...,2}
{
\draw[->>, blue, thick] (-6,{-3+2*\x+0.2}) -- (-1,{2*\x-0.2});
}
\foreach \i in {0,...,1}
{
\draw[red, fill] ({\i*-0.17}, 0) circle [radius=0.05];
}
\foreach \i in {0,...,3}
\foreach \j in {-1,1}
{
\draw[red!70, fill] ({-0.17*\i},{2+0.08*\j}) circle [radius=0.05];
\draw[red, thick] ({-0.17*\i},{2+0.08}) -- ({-0.17*\i},{2-0.08});
}
\node (L6a2) at (-10, -4.5) {$\widehat{HFL}(L6a2)$};
\node (unknot) at (0, -4.5) {$\widehat{HFL}(\mathrm{unknot})\otimes \FF^2$};
\end{tikzpicture}
\vspace{0.5em}
\caption{Visualization of component removal spectral sequence of the link $L6a2$. 8 red dots over the Alexander grading $1$ of $\widehat{HFL}(\mathrm{unknot})\otimes \FF^2$ are pairwise cancelled out as indicated in red edges, whereas the 2 red dots over the Alexander grading $0$ are not cancelled.}
\label{fig:CFSS}
\end{figure}
\end{center}
Now we state a simple but  powerful corollary of Theorem \ref{thm:minkowski}, which is repeatedly used in the next section.

\begin{lm}\label{lm:minkowski}
The Minkowski sum $P+ Q$ of two (possibly degenerate) convex polytopes $P$ and $Q$ has at least $\mathrm{max}\left(|\mathrm{vert}(P)|, |\mathrm{vert}(Q)|\right)$ vertices, where $\mathrm{vert}(P)$(resp. $\mathrm{vert}(Q)$) is the set of vertices in $P$(resp. $Q$).
\begin{proof}
It is a standard fact that the Minkowski sum of two convex polytopes is the convex hull of the componentwise sums of vertices of the two polytopes, hence $\mathrm{vert}(P+Q)$ is a subset of $\mathrm{vert}(P)+\mathrm{vert}(Q)$. This can be shown as follows:\\
Let $v$ be a vertex of $P+Q$ and $H$ be a supporting hyperplane of $P+Q$ at $v$. We can furthermore assume that $H$ intersects $P+Q$ only at $v$. (The set of hyperplanes intersecting $P+Q$ only at $v$ is dense in the set of all supporting hyperplanes of $P+Q$ at $v$.) Suppose that $\mathbf{a}^T\mathbf{x}+b=0$ is the defining equation for $H$, and $P+Q$ lies on the negative side of $H$ in the sense that $\mathbf{a}^T\mathbf{x}+ b\leq 0$ for $\mathbf{x}\in P+Q$. Note that $v$ is characterised as the only point in $P+Q$ which maximizes the height function $h(\mathbf{x})=\mathbf{a}^T\mathbf{x}+b$. By the convexity of $P$(resp. $Q$), we can find a hyperplane $H'$ parallel to $H$ and supporting $P$(resp. $Q$) on the negative side. Then for any vertex $v_1\in \mathrm{vert}(P)$(resp. $v_2\in \mathrm{vert}(Q)$) on $H'$, $v_1$(resp. $v_2$) maximizes the height function $h$ on $P$(resp. $Q$). Hence for any such pair $(v_1, v_2)$, $v_1+v_2$ is a point in $P+Q$ maximizing $h$, thus must be equal to $v$.

We prove the lemma with a little tweak on the above argument. Suppose that $v$ is a vertex of $P$ and $H$ is a supporting hyperplane of $P$ at $v$. Again we can assume without loss of generality that $H$ intersects $P$ only at $v$. Then $v$ is a unique point in $P$ which maximizes the height function $h$ with respect to the hyperplane $H$. Perturbing $H$ a bit if necessary, we can also assume that there is a unique point $w$ in $Q$ which maximizes $h$. Then $v+w$ is the only point in $P+Q$ which maximizes $h$, hence the map $\mathrm{vert}(P)\to \mathrm{vert}(P+Q), v\mapsto v+w$ (which may depend on the choice of hyperplanes $H$) is injective, implying that $|\mathrm{vert}(P)|\leq |\mathrm{vert}(P+Q)|$. Switching the ole of $P$ and $Q$, we also have the inequality $|\mathrm{vert}(Q)|\leq |\mathrm{vert}(P+Q)|$, proving the lemma.
\end{proof}
\end{lm}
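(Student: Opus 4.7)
The plan is to use the standard characterization of vertices of a convex polytope $R \subset \mathbb{R}^l$ as those points $v \in R$ at which some linear functional $\phi$ attains its maximum over $R$ uniquely; equivalently, the set $U_R(v) \subset (\mathbb{R}^l)^*$ of such functionals is nonempty and open, being the interior of the normal cone to $R$ at $v$. The key additivity is that for any linear functional $\phi$, one has $\max_{P+Q}\phi = \max_P \phi + \max_Q \phi$, and this value is attained at a point $p+q \in P+Q$ if and only if $\phi(p) = \max_P \phi$ and $\phi(q) = \max_Q \phi$. In particular, whenever $\phi$ is \emph{simultaneously} uniquely maximized at $v \in P$ and at $w \in Q$, the sum $v+w$ is the unique maximizer of $\phi$ on $P+Q$ and hence a vertex of $P+Q$.

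The main step is a perturbation argument: for each vertex $v$ of $P$, I would produce a functional $\phi_v \in U_P(v)$ that is also uniquely maximized at some vertex $w_v$ of $Q$. The set of functionals that fail to be uniquely maximized at a single vertex of $Q$ is the complement of $\bigcup_{w \in \mathrm{vert}(Q)} U_Q(w)$, which is a finite union of normal cones at positive-dimensional faces of $Q$ and therefore a lower-dimensional subset with empty interior in $(\mathbb{R}^l)^*$. Since $U_P(v)$ is a nonempty open subset of $(\mathbb{R}^l)^*$, a generic choice of $\phi_v \in U_P(v)$ avoids this ``bad'' set and hence lies in $U_Q(w_v)$ for some vertex $w_v$ of $Q$, producing a vertex $v + w_v$ of $P+Q$ as in the previous paragraph.

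It remains to show that the resulting assignment $v \mapsto v + w_v$ is an injection $\mathrm{vert}(P) \hookrightarrow \mathrm{vert}(P+Q)$. If $v_1 \neq v_2$ were to satisfy $v_1 + w_{v_1} = v_2 + w_{v_2}$, then evaluating $\phi_{v_1}$ on both sides would give $\phi_{v_1}(v_2) + \phi_{v_1}(w_{v_2}) = \phi_{v_1}(v_1) + \phi_{v_1}(w_{v_1}) = \max_P \phi_{v_1} + \max_Q \phi_{v_1}$, which forces $\phi_{v_1}(v_2) = \max_P \phi_{v_1}$ and contradicts that $\phi_{v_1}$ is uniquely maximized on $P$ at $v_1 \neq v_2$. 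This yields $|\mathrm{vert}(P)| \leq |\mathrm{vert}(P+Q)|$, and swapping the roles of $P$ and $Q$ gives the symmetric inequality, completing the proof. The main subtlety is the perturbation step; one must verify carefully that the set of functionals not uniquely maximized at a vertex of $Q$ genuinely has empty interior, which is a standard consequence of the structure of the normal fan of $Q$.
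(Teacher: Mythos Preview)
Your proof is correct and follows essentially the same approach as the paper's: for each vertex $v$ of $P$, pick a linear functional uniquely maximized at $v$, perturb it so that it is also uniquely maximized at some vertex $w$ of $Q$, and observe that $v+w$ is then a vertex of $P+Q$; then argue injectivity and swap the roles of $P$ and $Q$. The paper phrases this in terms of supporting hyperplanes and height functions rather than normal cones, and it asserts the injectivity of $v\mapsto v+w$ without writing out the evaluation argument you give, but the underlying idea is identical.
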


\begin{cor}\label{cor:numvert}
For a $l$-component link $L$ without trivial components, $P(L)$ has at least $2^l$ vertices. Moreover, the minimum is attained only by a centrally symmetric box whose edges are parallel to the coordinate axes.
\begin{proof}
The first statement comes from Lemma \ref{lm:minkowski} and Theorem \ref{thm:minkowski}, thus only the second statement requires a proof.

Let $P$ be the dual Thurston norm polytope of $L$ such that the link Floer polytope $P+ [-1,1]^l$ has $2^l$ vertices. Recall the proof of Lemma \ref{lm:minkowski} and especially the construction of the injective map $\mathrm{vert}([-1,1]^l)\hookrightarrow\mathrm{vert}(P+[-1,1]^l)$ therein. What we have shown there can be reformulated as follows:\\
For a vertex $v$ of $[-1,1]^l$, let $C(v)\subseteq \mathrm{vert}(P)$ be the set of vertices $w\in\mathrm{vert}(P)$ for which there exists a hyperplane $H$ such that $v$(resp. $w$) uniquely maximizes the height function $h$ with respect to $H$ within $[-1, 1]^l$(resp. $P$). Then there exists an injection  $\mathrm{vert}([-1,1]^l)\hookrightarrow \mathrm{vert}(P+[-1,1]^l)$ such that each $v$ is mapped into $v+C(v)$.

We also note that $\left\{v+C(v)\right\}_{v\in \mathrm{vert}([-1,1]^l)}$ as defined above is a partition of $\mathrm{vert}(P+[-1,1]^l)$. This is because for each vertices $v+w$, the vertices $v, w, v+w$ are the unique maximum points of a height function within $[-1,1]^l, P, P+[-1,1]^l$, respectively, thus there is no overlap between the sets $v+C(v), v\in \mathrm{vert}([-1,1]^l)$. As the number of vertices in $P+[-1,1]^l$ is the same as the number of vertices in $[-1,1]^l$, we conclude that $C(v)$'s are singleton sets. Let $w=w(v)\in \mathrm{vert}(P)$ be the unique vertex in $C(v)$. Then we claim that the solid angle of $P+[-1,1]^l$ with respect to $v+w$, ie., the intersection of $\mathrm{cone}_{v+w}(P+[-1,1]^l)$ and a small sphere centered at $v+w$, is contained in the solid angle of the hypercube $[-1,1]^l$ with respect to $v$; otherwise one can find a supporting hyperplane of $[-1,1]^l$ at $v$ which does not support $P+[-1,1]^l$ at $v+w$, but then there exists a vertex $w'$ of $P$ maximizing the height function with respect to $H$ (which is necessarily different from $w$) and hence $C(v)$ contains two different elements $w$ and $w'$, a contradiction.

Now we observe that the union of the solid angles of a convex polytope with respect to every vertex must cover the whole sphere. As the $2^l$ solid angles of the hypercube $[-1,1]^l$ form a partition of the sphere (to be more precise, the solid angles intersect with measure 0), the solid angles of $P+[-1,1]^l$ cannot be strictly smaller than the solid angles of $[-1,1]^l$. Hence each vertex $v+w$ of $P+ [-1,1]^l$ has the solid angle the same as that of the hypercube at the corresponding vertex $v$. Hence the Minkowski sum polytope $P+ [-1,1]^l$ must be a box whose edges are parallel to the coordinate axes. The central symmetry condition follows from the symmetry of the link Floer homology.
\end{proof}
\end{cor}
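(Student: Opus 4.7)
The lower bound is immediate from Theorem \ref{thm:minkowski} together with Lemma \ref{lm:minkowski}. Since $L$ has no trivial components, $2P(L)$ equals the Minkowski sum $P+[-1,1]^l$, where $P$ is the dual Thurston norm polytope; the hypercube contributes $2^l$ vertices, and Lemma \ref{lm:minkowski} gives $|\mathrm{vert}(P+[-1,1]^l)|\geq 2^l$.

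For the rigidity statement, my plan is to argue at the level of normal fans (equivalently, solid angles at vertices). Recall that the normal cone of a Minkowski sum at a vertex $v+w$ equals the intersection of the normal cones of the summands at $v$ and $w$, so the normal fan of $P+[-1,1]^l$ is the common refinement of the normal fans of $P$ and $[-1,1]^l$. The normal fan of $[-1,1]^l$ is simply the partition of $\mathbb{R}^l$ into the $2^l$ coordinate orthants. If $P+[-1,1]^l$ has exactly $2^l$ vertices, then its normal fan has exactly $2^l$ full-dimensional cones; each such cone is contained in a single orthant, and the orthants already tile $\mathbb{R}^l$, so by a pigeonhole/volume comparison each orthant must coincide with exactly one full-dimensional cone of the common refinement. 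In other words, the normal fan of $P+[-1,1]^l$ \emph{is} the orthant fan, which forces $P+[-1,1]^l$ to be a rectangular box with edges parallel to the coordinate axes. The required central symmetry then follows from the central symmetry $\widehat{HFL}(L,h)\cong\widehat{HFL}(L,-h)$ of link Floer homology.

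The main technical point I expect to dwell on is the pigeonhole step above: one must know that the full-dimensional normal cones of a convex polytope genuinely tile $\mathbb{R}^l$ (equivalently, that the solid angles at its vertices partition the unit sphere), a standard fact from convex geometry that the proof of Lemma \ref{lm:minkowski} already used tacitly. Once granted, the deduction that $P+[-1,1]^l$ must be an axes-parallel box is a bookkeeping exercise along the lines sketched above.
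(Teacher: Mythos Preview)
Your argument is correct and is essentially the dual formulation of the paper's proof: where the paper works with solid angles (tangent cones) at the vertices and uses the covering of the sphere, you work with normal cones and the completeness of the normal fan, which is the polar statement. The one substantive shortcut you take is invoking the standard fact that the normal fan of a Minkowski sum is the common refinement of the summands' fans; this lets you bypass the paper's intermediate step of revisiting the injection from Lemma~\ref{lm:minkowski} and arguing that each $C(v)$ is a singleton. After that, both proofs finish with the identical pigeonhole/measure argument (your $2^l$ full-dimensional cones, each inside an orthant, tiling $\mathbb{R}^l$ is exactly the paper's solid-angle covering). One small inaccuracy: the tiling fact you flag is not actually used in the proof of Lemma~\ref{lm:minkowski}, but it is of course standard.
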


Another trivial observation is:
\begin{cor}
For a link without trivial components, its link Floer polytope is nondegenerate.
\begin{proof}
Any Minkowski sum with the hypercube $[-1,1]^l$ contains a copy of $[-1,1]^l$, whose dimension is $l$.
\end{proof}
\end{cor}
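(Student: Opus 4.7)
The plan is to deduce this directly from Theorem \ref{thm:minkowski}, which identifies (a scaling of) the link Floer polytope $P(L)$ with a Minkowski sum of the dual Thurston norm polytope $P$ of the link complement and the standard hypercube $[-1,1]^l$. The hypothesis that $L$ has no trivial components is precisely what is needed to invoke that theorem, so I can work with this Minkowski sum representation from the outset.

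The key observation is a general fact about Minkowski sums: for any nonempty convex body $P\subseteq \mathbb{R}^l$ and any point $p_0\in P$, the translate $p_0+[-1,1]^l$ is contained in $P+[-1,1]^l$. Since the hypercube $[-1,1]^l$ already has full dimension $l$ in $\mathbb{R}^l$, this inclusion forces $P+[-1,1]^l$ to have dimension $l$ as well. Rescaling by $\tfrac{1}{2}$ preserves dimension, so $P(L)$ itself is $l$-dimensional, i.e.\ nondegenerate in the ambient affine lattice $\mathbb{H}(L)$.

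There is no real obstacle here; the only point worth double-checking is that the dual Thurston norm polytope is nonempty (needed so that we can choose the point $p_0$), but this is automatic since the polytope is by definition the convex hull of a finite set of points in $H_2(S^3\setminus L,\partial;\mathbb{R})^*$, and the $\mathrm{HFL}$-support of any link is nonempty. So the entire proof reduces to one line invoking Theorem \ref{thm:minkowski} and the trivial inclusion above.
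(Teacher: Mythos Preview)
Your proposal is correct and takes essentially the same approach as the paper: both invoke Theorem \ref{thm:minkowski} and observe that the Minkowski sum with $[-1,1]^l$ contains a translate of the full-dimensional hypercube. You have simply spelled out explicitly (the choice of $p_0$, the nonemptiness of $P$, the rescaling) what the paper compresses into a single sentence.
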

\begin{rmk}
The dual Thurston norm polytope of a link $L$ may be degenerate, even under the assumption that there are no trivial components in $L$.
\end{rmk}

\section{Proof of Theorem \ref{thm:main}}
\begin{defi}
Let $R_l$ be the second smallest rank of the link Floer homology among the set of all $l$-component links in $S^3$.
\end{defi}
\begin{rmk}
Due to Remark \ref{rmk:even} and Remark \ref{rmk:hopf}, $R_l\leq 2^l$ and $R_l$ is even for $l>1$.
\end{rmk}
For the rest of this section, we consider a link $L$ of $l>1$ components that realizes the second smallest link Floer rank. 

By Remark \ref{rmk:hopf}, $L$ must be a link of unknots. We may assume for the following proof that $L$ is nonsplit, using the disjoint union formula along with the induction on the number of components of $L$.\\
More precisely, if $L=L_1\sqcup L_2$ were split link, then from the disjoint union formula in Theorem \ref{thm:connsum} either $L_1$ or $L_2$ must be an unlink, otherwise we achieve smaller link Floer rank by replacing one of the nontrivial $L_i$ with the unlink. And if a link had the second smallest link Floer rank, any sublink obtained by removing an unlink sublink should also have the second smallest link Floer rank. Hence we can reduce it to links with a smaller number of components.

\begin{rmk}\label{rmk:relativethm}
Adding a link component increase the link Floer rank by at least twice in view of the spectral sequence in Theorem \ref{thm:CFSS} and the equality holds if the added component is trivial by the disjoint union formula. 
\end{rmk}

In particular, the link Floer polytope of $L$ has at least $2^l$ vertices. Note also that the proof of Corollary \ref{cor:numvert} applies equally well to the projection of the link Floer polytope onto the coordinate hyperplanes, as the projection of the cube is a cube, just one dimension lower.

We first treat the 2-component link case, which is the initial step of the induction for the general case.
\begin{prop}
$R_2=4$, and only the Hopf link realizes this rank.
\begin{proof}
It is easy to check that Hopf link has link Floer rank 4. By the previous remark, we have that $R_2=4$. Hence it suffices to show that Hopf link is the only link with link Floer rank 4.

Let $L$ be a 2-component nonsplit link with link Floer rank 4. By the symmetry and nondegeneracy of the link Floer polytope, the Alexander multigrading of the four generators of $\widehat{HFL}(L)$ are of the form $(a,b),(-a,b),(a,-b),(-a,-b)$ for some $a,b>0$
.

Let $\lk$ be the linking number of $L$. (A different choice of orientation may reverse the sign of $\lk$, but it does not change the following argument.) As the grading shift by $\frac{1}{2}\lk$ must send $\pm a$ and $\pm b$ to 0, the only Alexander grading of the unknot, we have that $a=b=|\frac{1}{2}\lk|$.

As there is a unique generator with the maximum total Alexander grading $|\mathrm{lk}|$, the fiberedness detection theorem\cite{niKnotFloerHomology2007} guarantees that $L$ is fibered and the genus of the fiber is $|\lk|-1$. If the genus $|\lk|-1$ were positive, the knot Floer homology in its next-to-top Alexander grading $\widehat{HFK}(L, |\lk|-1)$ must have been nontrivial by \cite{baldwinNoteKnotFloer2018}, but the knot Floer homology is supported exactly on $\pm |\lk|$ and $0$, a contradiction. Hence the linking number $\lk$ is 1 and the fiber is an annulus. This implies that $L$ must be a Hopf link, as the powers of the positive Dehn twist are the only possible monodromies on the annulus. As $p$-th power of the Dehn twist gives $L(p,1)$, the only possibility is the Hopf link in $S^3$ when $p=\pm 1$.
\end{proof}
\end{prop}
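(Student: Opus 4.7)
The plan is to first show $R_2 = 4$ and then classify the realizers. Since any 2-component link has even rank (Remark \ref{rmk:even}) and the rank inequality $\lfr(L) \geq 2\lfr(L \setminus L_1) \geq 2$ is saturated only for the 2-component unlink (by Ni's unlink detection, \cite{niHomologicalActionsSutured2014}), the next admissible value is $4$, which the Hopf link realizes, so $R_2 = 4$.

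Now let $L$ be a 2-component link with $\lfr(L) = 4$. First I would rule out the split case: if $L = L_1 \sqcup L_2$, then Theorem \ref{thm:connsum} gives $\lfr(L) = 2\lfr(L_1)\lfr(L_2)$ where each factor is odd (Remark \ref{rmk:even}), so $\lfr(L) \equiv 2 \pmod 4$, contradicting $\lfr(L) = 4$. For nonsplit $L$, the rank inequality forces $\lfr(L \setminus L_i) = 1$, so each $L \setminus L_i$ is an unknot by the unknot detection theorem. In particular $L$ has no trivial components, so Corollary \ref{cor:numvert} pins down the link Floer polytope as a centrally symmetric axis-aligned box whose four vertices $(\pm a, \pm b)$, $a, b > 0$, each carry rank $1$.

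Next I would apply the component removal spectral sequence (Theorem \ref{thm:CFSS}) with respect to $L_1$. Its limit is $\widehat{HFL}(\text{unknot}) \otimes \mathbb{F}^2$, concentrated in $L_2$-grading $0$ after the shift by $-\frac{1}{2}\lk(L_1, L_2)$. The four generators of $\widehat{HFL}(L)$ project to shifted gradings $b - \frac{1}{2}\lk$ and $-b - \frac{1}{2}\lk$, two at each value. Since the differential is homogeneous in the shifted grading, collapsing to exactly two survivors sitting at $0$ forces $b = \frac{1}{2}|\lk|$; repeating the argument with $L_2$ removed gives $a = \frac{1}{2}|\lk|$.

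Finally, the polytope exhibits a unique generator at the top total Alexander grading $|\lk|$, so by Ni's fiberedness detection theorem \cite{niKnotFloerHomology2007}, $L$ is a fibered link of fiber genus $|\lk| - 1$. If this genus were positive, \cite{baldwinNoteKnotFloer2018} would force nontrivial link Floer homology in the next-to-top Alexander grading, contradicting the box description. Hence $|\lk| = 1$ and the fiber is an annulus, whose mapping class group is generated by the boundary Dehn twist; its $p$-th power produces $L(p, 1)$ as ambient manifold, and the case $S^3$ occurs only for $p = \pm 1$, yielding the Hopf link. The main obstacle will be the bookkeeping in the spectral-sequence step that extracts $a = b = \frac{1}{2}|\lk|$ from the mere ranks, since the remaining arguments are off-the-shelf applications of detection and fiberedness theorems.
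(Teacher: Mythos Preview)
Your proposal is correct and follows essentially the same route as the paper: reduce to the nonsplit case, use Corollary~\ref{cor:numvert} to force the support to be the four corners $(\pm a,\pm b)$ of an axis-aligned rectangle, read off $a=b=\tfrac12|\lk|$ from the grading shift in Theorem~\ref{thm:CFSS}, and then combine fiberedness detection with \cite{baldwinNoteKnotFloer2018} and the annulus monodromy argument. The only cosmetic difference is your handling of the split case via the observation $\lfr(L_1\sqcup L_2)=2\cdot(\text{odd})\cdot(\text{odd})\equiv 2\pmod 4$, which is a clean shortcut compared to the paper's inductive reduction, but the substance of the argument is identical.
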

Now follows the proof for the general case using induction.
\begin{prop}
For $l>2$, $R_l=2^l$ and only the disjoint union of the Hopf link and the unlink realizes this rank.
\begin{proof}
We proceed by induction on the number of components $l$.

First we show that $R_l=2^l$. Suppose by contradiction that $R_l<2^l$. Removing a component, the $(l-1)$-component sublinks of $L$ must have link Floer rank $<2^{l-1}$, hence they are the $(l-1)$-component unlink by the induction hypothesis $R_{l-1}=2^{l-1}$. In particular, $L$ is algebraically split and thus the grading shifts in the spectral sequence in Theorem \ref{thm:CFSS} are all zero.

Now consider the spectral sequence in Theorem \ref{thm:CFSS}. As there is no shift in Alexander multigrading, the spectral sequence simply projects $P(L)$ onto the hyperplane $x_i=0$. Let $P'(L)$ be the image of $P(L)$ under this projection. The differential $D$ cancels out the dots of $P'(L)$ of the same Alexander multigrading by pairs to yield $\widehat{HFL}(L\setminus L_1)\otimes \FF^2$, hence there exists an even number of dots over each Alexander multigradings except possibly at the origin. A priori the parity of the number of dots over the origin is not determined, but as the sum of all the number of dots must be even, the parity of the number of dots over the origin is even as well.

Noting that $P'(L)$ is the Minkowski sum of the projection of the dual Thurston norm polytope and a hypercube $[-1,1]^{l-1}$, the proof of Corollary \ref{cor:numvert} applies verbatim to $P'(L)$, hence $P'(L)$ has at least $2^{l-1}$ different Alexander multigradings. In particular, $P'(L)$ has at least $2\times 2^{l-1}=2^l$ dots. But then the link Floer rank of $L$ must be at least $2^l$, a contradiction. Hence $R_l=2^l$.

Next, we show that only the disjoint union of the Hopf link and the unlink realizes the link Floer rank $2^l$. Consider the link Floer polytope $P(L)$. By Corollary \ref{cor:numvert}, $P(L)$ is a centrally symmetric box whose edges are parallel to the coordinate axes. By the induction hypothesis, the $(l-1)$-component sublinks of $L$ are either unlink or disjoint union of the Hopf link and the unlink.

Suppose first that one of the $(l-1)$-component sublinks is the unlink. Consider the spectral sequence in Theorem \ref{thm:CFSS} to the link Floer homology of this unlink. As the projection of $P(L)$ along each coordinate axis is 2-1 and only the origin, the unique Alexander multigrading of the link Floer homology of the unlink, is left after the cancellation from the spectral sequence in Theorem \ref{thm:CFSS}, the differential $D$ from the spectral sequence in Theorem \ref{thm:CFSS} cancels out all the dots except the two which are sent to the origin. Hence the sublink has link Floer rank at most $2/2=1$, ie, the sublink must be an unknot. Hence $l=2$, which is already covered in the previous proposition.

Now suppose that all the sublinks are the disjoint union of the Hopf link and the unlink. As the link Floer rank of the disjoint union of the Hopf link and the $(l-3)$-component unlink is $2^{l-1}$, the spectral sequence in Theorem \ref{thm:CFSS} degenerates for any choice of link component $L_1\subseteq L$ and each of the projected polytopes $P'(L)$ is a symmetric box by the second statement of Corollary \ref{cor:numvert}. From the grading shift formula, this implies that the linking numbers of $L$ are all zero; otherwise the shift will break the central symmetry. But the linking number between the two components of the Hopf link is nonzero, a contradiction.
\end{proof}
\end{prop}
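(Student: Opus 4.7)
The plan is to proceed by induction on the number of components $l$, building on the $l=2$ case already established in the preceding proposition. I would split the argument cleanly into two halves: first the lower bound $R_l \geq 2^l$, then the uniqueness of the extremal link.

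For the lower bound, I would argue by contradiction. Assume $L$ is an $l$-component link with $\lfr(L) < 2^l$. By the rank inequality, every $(l-1)$-component sublink $L \setminus L_i$ has link Floer rank strictly less than $2^{l-1}$, so by the inductive hypothesis $R_{l-1} = 2^{l-1}$ each such sublink must be the $(l-1)$-component unlink. This in turn forces all pairwise linking numbers of $L$ to vanish, so the Alexander grading shifts in the component-removal spectral sequence of Theorem \ref{thm:CFSS} are zero, and the spectral sequence reduces to plain projection of the polytope $P(L)$ along a coordinate axis. The projected polytope $P'(L)$ remains a Minkowski sum of a projected dual Thurston norm polytope with the hypercube $[-1,1]^{l-1}$, so the proof of Corollary \ref{cor:numvert} applies to it and guarantees at least $2^{l-1}$ distinct lattice points carrying nonzero homology. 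A parity argument, using that $D$ pairs off dots over each projected grading to produce $\widehat{HFL}(L \setminus L_1) \otimes \FF^2$ (which has even total rank), then shows every projected grading supports an even number of dots, hence at least $2 \cdot 2^{l-1} = 2^l$ in total, contradicting the assumption.

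For uniqueness, suppose $\lfr(L) = 2^l$. Then $P(L)$ has exactly $2^l$ vertices, so by Corollary \ref{cor:numvert} it is a centrally symmetric box with edges parallel to the coordinate axes. By induction, each $(l-1)$-component sublink is either the unlink or the disjoint union of a Hopf link with an $(l-3)$-component unlink. If some sublink is the unlink, the spectral sequence argument above — combined with the fact that the projection of $P(L)$ along a coordinate axis is $2$-to-$1$ onto a nondegenerate box, while the unlink is supported only at the origin — forces $L$ to have exactly one remaining component after removal, hence $l = 2$, which is already handled. Otherwise every $(l-1)$-sublink is a Hopf-plus-unlink, so each projected polytope must itself be a symmetric box by the second clause of Corollary \ref{cor:numvert}; the grading-shift formula in Theorem \ref{thm:CFSS} then forces every linking number of $L$ to vanish, which is incompatible with the existence of a Hopf sublink. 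The only remaining possibility is that $L$ itself is the disjoint union of a Hopf link with an $(l-2)$-component unlink.

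The main obstacle I expect is the parity bookkeeping in the projection step: a priori the origin of $P'(L)$ might carry an odd number of dots, which could threaten the lower bound. The saving point is Remark \ref{rmk:even}, which forces the total rank of $\widehat{HFL}(L)$ to be even and thus pins down the parity at the origin as well. A second delicate point is verifying that the central symmetry of $P(L)$ really does fail as soon as any linking number is nonzero — this follows from the explicit shift $(h_2 - \tfrac12\lk(L_1,L_2), \dots, h_l - \tfrac12\lk(L_1,L_l))$ in Theorem \ref{thm:CFSS}, but it requires care to rule out accidental cancellations when multiple linking numbers conspire.
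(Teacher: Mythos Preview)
Your proposal is correct and follows essentially the same route as the paper: the same inductive structure, the same contradiction for the lower bound via projecting the polytope and the parity count (with the origin handled exactly as in Remark \ref{rmk:even}), and the same two-case uniqueness analysis culminating in the linking-number contradiction. The one point left implicit in both your write-up and the paper's is the standing reduction to nonsplit links made in the paragraph preceding the proposition, which is what licenses the invocations of Corollary \ref{cor:numvert} and what makes your final sentence (``the only remaining possibility is \ldots'') logically close.
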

\section{Equality in rank inequalities}
As we observed in the previous section, for a fixed number of components, the problem of classifying all the links with small link Floer rank reduces to the classification of links of minimal link Floer rank within the category of nonsplit links. In this section, we provide some approaches on the latter problem.

One useful argument is to keep track of the change of link Floer rank under the spectral sequence of Theorem \ref{thm:split}. As the link Floer rank is a measurement of the complexity of the link, if the spectral sequence significantly decrease the link Floer rank, there is a good chance that we may replace the removed component with a simpler knot to obtain a link with smaller link Floer rank. On the other hand, if the spectral sequence does not decrease the link Floer rank, then the removed component is linked to the rest of the link in simplest way in a sense. Formally:

\begin{question}
When the spectral sequence in Theorem \ref{thm:CFSS} becomes trivial? Equivalently, when the equality holds in the following rank inequality?
\begin{align*}
    \widehat{HFL}(L)\geq 2\cdot \widehat{HFL}(L\setminus L_1)
\end{align*}
\end{question}
The disjoint union formula provides complete answer to the above question when $L_1$ is unlinked from the rest of $L$. Hence we are mainly interested in nonsplit links.

\begin{proof}[Proof of Proposition \ref{prop:relative}]
The first statement follows from the grading shift formula. The $i$-th Alexander grading is shifted by $-\frac{1}{2}\lk(L_1,L_i)$ under the spectral sequence, hence all the linking numbers $\lk(L_1, L_i)$ must be zero to ensure the central symmetry of the link Floer homology.

The second statement is a simple corollary of the splitness detection theorem, Theorem \ref{thm:splitdetection}. We first observe that homological action commutes with the differential $D$:\\
For a generator $\mathbf{x}\in \mathbb{T}_\alpha\cap\mathbb{T}_\beta$ and a relative 1-cycle $\zeta$, 
\begin{align*}
    D(\zeta\cdot\mathbf{x})=\zeta\cdot D\mathbf{x}.
\end{align*}
Hence the homological action induces an action on $\widehat{HFL}(L\setminus L_1)$. To identify this action with the homological action on the sublink $L\setminus L_1$, it is best to extend the definition of homological action to generalized Heegaard diagrams. (cf. \cite[Chapter 3]{manolescuHeegaardFloerHomology2010})

In a generalized Heegaard diagram, we allow $\mathbf{z}$ to have less points than $\mathbf{w}$. Such a Heegaard diagram is called \textit{link-minimal} in \cite{manolescuHeegaardFloerHomology2010}), and the Heegaard diagrams considered so far is called \textit{minimally-pointed}. This data uniquely determines a link in the same manner as before, the only difference is that there are some $\mathbf{w}$ basepoints (called \textit{free} basepoints) which are not paired with $\mathbf{z}$ basepoints to form a link component. All the definition and proof on the link Floer homology and homological action work equally well with respect to the generalized Heegaard diagrams, with the caveat that we do not allow relative 1-cycles to have ends on the free basepoints.

More precisely, a link-minimal Heegaard diagram $\mathcal{H}'$ is obtained by applying free zero/three stabilizations to a minimally-pointed Heegaard diagram $\mathcal{H}$. Each time we apply a free zero/three stabilization, the link Floer homology $\widehat{HFL}(\mathcal{H}')$ is doubled:
\begin{align*}
    \widehat{HFL}(\mathcal{H}')\cong\widehat{HFL}(\mathcal{H})\otimes \FF^2.
\end{align*}
Moreover, up to appropriate Heegaard moves, the free zero/three stabilization can be done in the region away from the relative 1-cycle $\zeta$. Hence the action of $\zeta$ on $\widehat{HFL}(\mathcal{H}')\cong \widehat{HFL}(\mathcal{H})\otimes \mathbb{F}^2$ is the tensor product of the action of $\zeta$ on $\widehat{HFL}(\mathcal{H})$ and the trivial action on $\FF^2$.

In this framework, the spectral sequence in Theorem \ref{thm:CFSS} is the result of removing a $z$ basepoint from a Heegaard diagram $\mathcal{H}=(\Sigma, \bm{\alpha}, \bm{\beta}, \mathbf{w},\mathbf{z})$ of $L$, where the remaining $w$ basepoint becomes a free basepoint. Hence it is clear that the induced action of $\zeta$ on $\widehat{HFL}(L\setminus L_1)\otimes \FF^2$ coincides with the action of $\zeta$ on $\widehat{HFL}(\mathcal{H})$.

Hence the spectral sequence is equivariant under the action of $\zeta$. But the splitness detection theorem says that if $L\setminus L_1$ is split, the truncated module $\widehat{HFL}(L\setminus L_1)$ is free with respect to the path connecting two split compoents (and so is $\widehat{HFL}(L\setminus L_1)\otimes \FF^2$). As $\widehat{HFL}(L)$ is not free with respect to any relative 1-cycle by the nonsplit hypothesis, this is a contradiction.
\end{proof}

\bibliographystyle{amsalpha}
\bibliography{bib}

\end{document}